\title{Low analytic rank implies low partition rank for tensors}
\newtheorem{theorem}{Theorem}[section]
\newtheorem{lemma}[theorem]{Lemma}
\newtheorem{definition}[theorem]{Definition}
\newtheorem*{definition*}{Definition}
\theoremstyle{remark}
\newtheorem{remark}[theorem]{Remark}
\def\e{\epsilon}
\def\E{\mathbb{E}}
\def\C{\mathbb{C}}
\def\P{\mathbb{P}}
\def\F{\mathbb{F}}
\def\a{\alpha}
\def\d{\delta}
\def\b1{\mathbbm{1}}
\def \cB{\mathcal B}
\def \cC{\mathcal C}
\def \cD{\mathcal D}
\def \cG{\mathcal G}
\def\rank{\mathop{\mathrm{rank}}}
\def \tower{{\rm tower}}
\def \bias{{\rm bias}}
\def \arank{{\rm arank}}
\def \prank{{\rm prank}}
\def \s{\subset}
\begin{document}
	
\author{Oliver Janzer\thanks{Department of Pure Mathematics and Mathematical Statistics, University of Cambridge. E-mail: oj224@cam.ac.uk}}

\date{}	
	
\maketitle

\begin{abstract}
A tensor defined over a finite field $\F$ has low analytic rank if the distribution of its values differs significantly from the uniform distribution. An order $d$ tensor has partition rank 1 if it can be written as a product of two tensors of order less than $d$, and it has partition rank at most $k$ if it can be written as a sum of $k$ tensors of partition rank 1. In this paper, we prove that if the analytic rank of an order $d$ tensor is at most $r$, then its partition rank is at most $f(r,d,|\F|)$. Previously, this was known with $f$ being an Ackermann-type function in $r$ and $d$ but not depending on $\F$. The novelty of our result is that $f$ has only tower-type dependence on its parameters. It follows from our results that a biased polynomial has low rank; there too we obtain a tower-type dependence improving the previously known Ackermann-type bound.
\end{abstract}

\section{Introduction}

\subsection{Bias and rank of polynomials}

For a polynomial $P: \F^n\rightarrow \F$, we say that $P$ is unbiased if the distribution of the values $P(x)$ is close to the uniform distribution on $\F$; otherwise we say that $P$ is biased. It is an important direction of research in higher order Fourier analysis to understand the structure of biased polynomials.

Note that a generic degree $d$ polynomial should be unbiased. In fact, as we will see below, if a degree $d$ polynomial is biased, then it can be written as a function of not too many polynomials of degree at most $d-1$. Let us now make this discussion more precise.

\begin{definition}
	Let $\F$ be a finite field and let $\chi$ be a nontrivial character of $\F$. The \emph{bias} of a function $f:\F^n\rightarrow \F$ with respect to $\chi$ is defined to be $\bias_{\chi}(f)=\E_{x\in \F^{n}} \lbrack \chi(f(x)) \rbrack$. (Here and elsewhere in the paper $\E_{x\in G}h(x)$ denotes $\frac{1}{|G|}\sum_{x\in G}h(x)$.)
\end{definition}

\begin{remark}
	Most of the previous work is on the case $\F=\F_p$ with $p$ a prime, in which case the standard definition of bias is $\bias(f)=\E_{x\in \F^n} \omega^{f(x)}$ where $\omega=e^{\frac{2\pi i}{p}}$.
\end{remark}

\begin{definition}
	Let $P$ be a polynomial $\F^n\rightarrow \F$ of degree $d$. The \emph{rank} of $P$ (denoted $\rank(P)$) is defined to be the smallest integer $r$ such that there exist polynomials $Q_1,\dots,Q_r:\F^n\rightarrow \F$ of degree at most $d-1$ and a function $f:\F^r\rightarrow \F$ such that $P=f(Q_1,\dots,Q_r)$.
\end{definition}

As discussed above, it is known that if a polynomial has large bias, then it has low rank. The first result in this direction was proved by Green and Tao \cite{greentao} who showed that if $\F$ is a field of prime order and $P:\F^n\rightarrow \F$ is a polynomial of degree $d$ with $d<|\F|$ and $\bias(P)\geq \d>0$, then $\rank(P)\leq c(\F,\d,d)$. Kaufman and Lovett \cite{kaufmanlovett} proved that the condition $d<|\F|$ can be omitted. In both results, $c$ has Ackermann-type dependence on its parameters. Finally, Bhowmick and Lovett \cite{bhowmicklovett} proved that if $d<\text{char}(\F)$ and $\bias(P)\geq |\F|^{-s}$, then $\rank(P)\leq c'(d,s)$. The novelty of this result is that $c'$ does not depend on $\F$. However, it still has Ackermann-type dependence on $d$ and $s$.

One of our main results is the following theorem, which improves the result of Bhowmick and Lovett unless $\F$ is very large. In this result, and in the rest of the paper, $\tower_{8|\F|}(h,x)$ denotes a tower of $8|\F|$'s of height $h$ with an $x$ on top, that is, $\tower_{8|\F|}(0,x)=x$ and $\tower_{8|\F|}(h,x)=(8|\F|)^{\tower_{8|\F|}(h-1,x)}$.

\begin{theorem} \label{biaslowrank}
	Let $\F$ be a finite field and let $\chi$ be a nontrivial character of $\F$. Let $P$ be a polynomial $\F^n\rightarrow \F$ of degree $d<\text{char}(\F)$. Suppose that $\bias_{\chi}(P)\geq \e>0$. Then $$\rank(P)\leq 2^{d}\tower_{8|\F|}((d+3)^{d+3},(1/\e)^{2^d})+1$$
\end{theorem}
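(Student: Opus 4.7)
The plan is to reduce Theorem~\ref{biaslowrank} to the paper's main tensor theorem (low analytic rank implies low partition rank with tower-type dependence) via the standard multilinearization of $P$. The first step is to associate to $P$ its $d$-fold ``derivative'' tensor
$$T(h_1,\ldots,h_d) \;=\; \Delta_{h_1}\cdots\Delta_{h_d}P(x), \qquad \Delta_h f(x):= f(x+h)-f(x).$$
Because $d<\mathrm{char}(\F)$, this expression is independent of $x$; it defines a symmetric $d$-linear form; and on the diagonal it recovers $T(x,\ldots,x)=d!\cdot P_d(x)$, where $P_d$ is the top-degree homogeneous component of $P$.

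The second step is to control the analytic rank of $T$ by the bias of $P$. This is the usual Cauchy--Schwarz cascade (equivalently, monotonicity of Gowers norms applied to $\chi\circ P$): iterating
$$|\E_x\chi(f(x))|^2 \;\leq\; \E_h\bigl|\E_x\chi(\Delta_hf(x))\bigr|$$
a total of $d$ times yields $|\bias_\chi(T)|\geq |\bias_\chi(P)|^{2^d}\geq \e^{2^d}$, so $\arank(T)\leq 2^d\log_{|\F|}(1/\e)$, and the ``bias argument'' of the tower becomes $1/|\bias_\chi(T)|\leq (1/\e)^{2^d}$.

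The third, and central, step is to apply the paper's main theorem to $T$, yielding $\prank(T)\leq \tower_{8|\F|}((d+3)^{d+3},(1/\e)^{2^d})$. Finally, I would decompose $T=\sum_{i=1}^r U_iV_i$ into $r$ partition-rank-$1$ summands (each $U_i,V_i$ a tensor of order $<d$) and evaluate on the diagonal: $T(x,\ldots,x) = \sum_i U_i(x,\ldots,x)\,V_i(x,\ldots,x)$ presents $P_d$ as a function of $2r$ polynomials of degree $<d$. The lower-degree components $P_0,\ldots,P_{d-1}$ of $P$ are themselves degree-$<d$ polynomials, so adjoining them to this list completes a presentation of $P$ as a function of polynomials of degree $<d$, bounding $\rank(P)$.

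The main obstacle is Step 3: the new tower-type bound for partition rank in terms of analytic rank, which is the principal content of the paper. Steps 1, 2, and 4 are standard machinery, but care is required in Step 4 to recover the precise bound $2^d\tower_{8|\F|}((d+3)^{d+3},(1/\e)^{2^d})+1$; in particular, the factor $2^d$ in front of the tower seems to require a more refined accounting than the naive $2r$ above, perhaps by iterating the reduction on the lower-degree homogeneous components $P_0,\ldots,P_{d-1}$, each of which will also need its own multilinearization and partition-rank decomposition.
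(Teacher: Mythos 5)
Your proposal is correct and follows essentially the same route as the paper: multilinearize $P$ to a symmetric tensor $T$, use monotonicity of Gowers norms (equivalently the Cauchy--Schwarz cascade) to bound $\arank(T)$ by $2^d\log_{|\F|}(1/\e)$, apply the main tensor theorem, and restrict to the diagonal. The paper formally inserts Theorem~\ref{largenormlowrank} as an intermediate step but the content is identical.

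The one thing to tidy up is your last paragraph. You misquote the main tensor theorem: Theorem~\ref{analyticpartition} gives $\prank(T)\leq 2^{d-1}\tower_{8|\F|}(\ldots)$, not $\tower_{8|\F|}(\ldots)$. With the factor $2^{d-1}$ in place, the ``naive'' accounting you sketch is already exact: writing $T$ as a sum of $r\leq 2^{d-1}\tower_{8|\F|}(\ldots)$ partition-rank-one summands and evaluating on the diagonal presents $\frac{1}{d!}T(x,\ldots,x)$ as a function of at most $2r\leq 2^d\tower_{8|\F|}(\ldots)$ polynomials of degree $<d$. You then need exactly one more polynomial of degree $<d$, namely $W:=P-\frac{1}{d!}T(x,\ldots,x)$ as a whole (not the individual homogeneous pieces $P_0,\ldots,P_{d-1}$ separately, which would cost $+d$ rather than $+1$), and certainly no recursive multilinearization of the lower-degree parts. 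That gives $\rank(P)\leq 2^d\tower_{8|\F|}((d+3)^{d+3},(1/\e)^{2^d})+1$ directly, matching the paper.
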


Recall that if $G$ is an Abelian group and $d$ is a positive integer, then the Gowers $U^d$ norm (which is only a seminorm for $d=1$) of $f:G\rightarrow \C$ is defined to be
$$\|f\|_{U^{d}}=\big|\E_{x,y_1,\dots,y_d\in G} \prod_{S\s \lbrack d\rbrack} \cC^{d-|S|} f(x+\sum_{i\in S}y_i)\big|^{1/2^d},$$
where $\cC$ is the conjugation operator. It is a major area of research to understand the structure of functions $f$ whose $U^d$ norm is large. Our next theorem is a result in this direction.

\begin{theorem} \label{largenormlowrank}
	Let $\F$ be a finite field and let $\chi$ be a nontrivial character of $\F$. Let $P$ be a polynomial $\F^n\rightarrow \F$ of degree $d<\text{char}(\F)$. Let $f(x)=\chi(P(x))$ and assume that $\|f\|_{U^{d}}\geq c>0$. Then $$\rank(P)\leq 2^{d}\tower_{8|\F|}((d+3)^{d+3},(1/c)^{2^d})+1$$
\end{theorem}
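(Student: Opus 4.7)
The plan is to deduce Theorem \ref{largenormlowrank} from the paper's main tensor-level theorem (low analytic rank implies low partition rank) via the standard translation between the Gowers $U^d$ norm of $\chi(P)$ and the bias of the $d$-th discrete derivative of $P$.

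First, I would expand $\|f\|_{U^d}^{2^d}$ with $f = \chi(P)$, using that $\cC^{d-|S|}\chi(P(y)) = \chi((-1)^{d-|S|} P(y))$, to obtain
$$\|f\|_{U^d}^{2^d} = \Big|\E_{x, y_1, \ldots, y_d \in \F^n} \chi\bigl(\partial_{y_1}\cdots\partial_{y_d} P(x)\bigr)\Big|,$$
where $\partial_y P(x) = P(x+y) - P(x)$. Because $P$ has degree exactly $d$, the iterated derivative on the right does not depend on $x$ and defines a symmetric multilinear $d$-tensor $T(y_1, \ldots, y_d)$ on $\F^n$ satisfying $T(x, \ldots, x) = d!\,P_d(x)$, where $P_d$ is the top-degree homogeneous part of $P$. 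Thus $|\bias_\chi(T)| \geq c^{2^d}$, which is precisely the statement that $T$ has analytic rank at most $2^d\log_{|\F|}(1/c)$.

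Next, I would invoke the paper's main theorem to bound $\prank(T)$ in terms of $\arank(T)$ with the tower-type dependence stated there, yielding a bound of the form
$$\prank(T) \leq 2^{d-1}\tower_{8|\F|}\bigl((d+3)^{d+3}, (1/c)^{2^d}\bigr).$$
Finally I would translate such a decomposition back into a rank bound on $P$. A partition-rank representation
$$T(y_1, \ldots, y_d) = \sum_{i=1}^{r} A_i(y_{S_i}) B_i(y_{[d]\setminus S_i}), \qquad \varnothing \ne S_i \subsetneq [d],$$
specialised to $y_1 = \cdots = y_d = x$ gives $d! \cdot P_d(x) = \sum_{i=1}^r Q_i(x) R_i(x)$, with each $Q_i = A_i(x, \ldots, x)$ and $R_i = B_i(x, \ldots, x)$ of degree strictly less than $d$. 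The hypothesis $d < \text{char}(\F)$ makes $d!$ invertible in $\F$, and since $P - P_d$ itself has degree $< d$, we can express $P$ as a function of the $2r+1$ polynomials $Q_1, R_1, \ldots, Q_r, R_r, P - P_d$, all of degree at most $d-1$. This gives $\rank(P) \leq 2r + 1$ and hence the stated bound.

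The main obstacle is the middle step: the paper's principal tensor-level result supplying the tower-type bound on partition rank. Steps one and three are standard manipulations from prior work on the bias/rank theme (Green--Tao, Kaufman--Lovett, Bhowmick--Lovett) and contribute only a mild, fully explicit overhead; all the genuine innovation (the improvement from Ackermann-type to tower-type dependence) is inherited from the tensor theorem invoked in the middle step.
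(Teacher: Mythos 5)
Your proposal is correct and follows essentially the same route as the paper's own proof: both pass from the $U^d$ norm to the bias of the symmetric difference tensor $T = D_{y_1}\cdots D_{y_d}P$, apply Theorem \ref{analyticpartition} to bound $\prank(T)$, and then specialize a partition-rank decomposition to the diagonal together with invertibility of $d!$ and a lower-degree remainder ($W = P - P_d$ in your notation) to extract the rank bound $2r+1$. The only cosmetic difference is that the paper phrases the remainder via Taylor's approximation while you phrase it via the top homogeneous part, which is the same thing.
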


Our result implies a similar improvement to the bounds for the quantitative inverse theorem for Gowers norms for polynomial phase functions of degree $d$.

\begin{theorem} \label{inverse}
	Let $\F$ be a field of prime order and let $P$ be a polynomial $\F^n\rightarrow \F$ of degree $d<\text{char}(\F)$. Let $f(x)=\omega^{P(x)}$ where $\omega=e^{\frac{2\pi i}{|\F|}}$ and assume that $\|f\|_{U^d}\geq c>0$. Then there exists a polynomial $Q:\F^n\rightarrow \F$ of degree at most $d-1$ such that $$|\E_{x\in \F^n} \omega^{P(x)}\overline{\omega^{Q(x)}}|\geq |\F|^{-2^{d}\tower_{8|\F|}((d+3)^{d+3},(1/c)^{2^d})-1}$$
\end{theorem}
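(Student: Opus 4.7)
The plan is to derive Theorem \ref{inverse} as an essentially immediate corollary of Theorem \ref{largenormlowrank} via a standard Parseval/pigeonhole step. Since $\chi(y) = \omega^y$ is a nontrivial additive character of $\F$ and $f = \chi \circ P$, the hypothesis $\|f\|_{U^d} \geq c$ lets me invoke Theorem \ref{largenormlowrank} to produce polynomials $Q_1,\dots,Q_r : \F^n \to \F$ of degree at most $d-1$ together with a function $g : \F^r \to \F$ satisfying $P = g(Q_1,\dots,Q_r)$, where $r \leq 2^d \tower_{8|\F|}((d+3)^{d+3},(1/c)^{2^d})+1$. Any linear combination $Q := \sum_{i=1}^r a_i Q_i$ with $a \in \F^r$ then automatically has degree at most $d-1$, so it suffices to find one $a$ for which this $Q$ has the required correlation with $P$.

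The natural way to locate such an $a$ is to sum the squared correlations over all $a$ and exploit the algebraic relation $P = g(Q_1,\dots,Q_r)$. Setting
$$S := \sum_{a \in \F^r} \Bigl| \E_x \omega^{P(x) - \sum_i a_i Q_i(x)} \Bigr|^2,$$
expanding the modulus squared as an average over pairs $(x,y)$, and applying the orthogonality $\sum_{a \in \F^r} \omega^{-\sum_i a_i t_i} = |\F|^r \b1[t=0]$ for $t \in \F^r$ transforms this into
$$S = |\F|^r \, \E_{x,y}\, \omega^{P(x)-P(y)} \, \b1\bigl[Q_i(x) = Q_i(y) \text{ for all } i\bigr].$$
The crucial point is that whenever the $Q_i$ agree at $x$ and $y$, the relation $P = g(Q_1,\dots,Q_r)$ forces $P(x) = P(y)$, so the phase is simply $1$. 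Therefore $S$ equals $|\F|^r$ times the collision probability $\sum_{v \in \F^r} \P[(Q_1,\dots,Q_r)(x) = v]^2$, and Cauchy--Schwarz bounds the latter below by $|\F|^{-r}$, giving $S \geq 1$.

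Since the sum $S$ has $|\F|^r$ terms, pigeonhole produces some $a \in \F^r$ for which the corresponding term is at least $|\F|^{-r}$, and taking square roots yields $|\E_x \omega^{P(x)-Q(x)}| \geq |\F|^{-r/2}$ for the associated $Q$. This is already strictly stronger than the bound claimed in the theorem (whose exponent equals $-r$), so Theorem \ref{inverse} follows. I do not anticipate any genuine obstacle in this argument: all of the heavy lifting is contained in Theorem \ref{largenormlowrank}, and the Parseval-style computation above is entirely routine.
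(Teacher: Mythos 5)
Your argument is correct, and it reaches the same destination as the paper by a closely related but not identical route. Both proofs begin identically: apply Theorem \ref{largenormlowrank} to obtain $Q_1,\dots,Q_r$ of degree at most $d-1$ with $P$ a function of them, and then seek a good linear combination $\sum_i a_i Q_i$ by a pigeonhole over $a \in \F^r$. The difference is in how the pigeonhole is set up. The paper writes $\omega^{P(x)} = g(Q_1(x),\dots,Q_r(x))$ for $g : \F^r \to \C$ with $|g| \equiv 1$, expands $g$ in characters of $\F^r$, uses $|\hat g(\chi)| \leq 1$ and the identity $1 = \E_x |\omega^{P(x)}|^2 = \sum_\chi \overline{\hat g(\chi)}\,\E_x \omega^{P(x)}\overline{\chi(Q(x))}$ to produce a character $\chi$ with correlation at least $|\F|^{-r}$; that is a first-moment (triangle-inequality) argument. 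You instead bound the second moment $S = \sum_a |\E_x \omega^{P(x)-a\cdot Q(x)}|^2$, collapse it by orthogonality to $|\F|^r$ times the collision probability of $(Q_1,\dots,Q_r)$, observe that the functional dependence $P = g(Q_1,\dots,Q_r)$ kills the phase on the diagonal event, and then use Cauchy--Schwarz to get $S \geq 1$. The payoff is that your pigeonhole yields $|\E_x \omega^{P(x)-Q(x)}| \geq |\F|^{-r/2}$, a factor-of-two improvement in the exponent over the paper's $|\F|^{-r}$, which of course still implies the stated bound. Both routes are entirely elementary once Theorem \ref{largenormlowrank} is in hand; yours is marginally sharper and arguably cleaner since it avoids mentioning $\hat g$ explicitly.
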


Theorems \ref{biaslowrank} and \ref{inverse} easily follow from Theorem \ref{largenormlowrank}.

\smallskip

\emph{Proof of Theorem \ref{biaslowrank}.} Note that when $f(x)=\chi(P(x))$, then $\|f\|^2_{U^1}=|\E_{x,y\in \F^n}\overline{f(x)}f(x+y)|=|\E_{x\in \F^n}f(x)|^2$, so $\|f\|_{U^1}=|E_{x\in \F^n}f(x)|=|\bias_{\chi}(P)|$. However, $\|f\|_{U^k}$ is increasing in $k$ (see eg. Claim 6.2.2 in \cite{fouriersurvey}), therefore $\|f\|_{U^d}\geq |\bias_{\chi}(P)|\geq \e$. The result is now immediate from Theorem \ref{largenormlowrank}.

\smallskip

\emph{Proof of Theorem \ref{inverse}.} By Theorem \ref{largenormlowrank}, there exists a set of $r\leq 2^{d}\tower_{8|\F|}((d+3)^{d+3},(1/c)^{2^d})+1$  polynomials $Q_1,\dots,Q_r$ such that $P(x)$ is a function of $Q_1(x),\dots,Q_r(x)$. \newline Then $\omega^{P(x)}=g(Q_1(x),\dots,Q_r(x))$ for some function $g:\F^r\rightarrow \C$. Let $G=\F^r$. Note that $|g(y)|=1$ for all $y\in G$, therefore $|\hat{g}(\chi)|\leq 1$ for every character $\chi\in \hat{G}$. Now $\omega^{P(x)}=\sum_{\chi\in \hat{G}} \hat{g}(\chi)\,\chi((Q_1(x),\dots,Q_r(x))$, so $$1=\E_{x\in \F^n} |\omega^{P(x)}|^2=\sum_{\chi \in \hat{G}} \overline{\hat{g}(\chi)}\bigg( \E_{x\in \F^n} \omega^{P(x)}\overline{\chi(Q_1(x),\dots,Q_r(x))} \bigg).$$ Thus, there exists some $\chi\in \hat{G}$ with $|\E_{x\in \F^n} \omega^{P(x)}\overline{\chi(Q_1(x),\dots,Q_r(x))}|\geq 1/|G|=1/|\F|^r$. But $\chi$ is of the form $\chi(y_1,\dots,y_r)=\omega^{\sum_{i\leq r} \a_i y_i}$ for some $\a_i\in \F$. Then $\chi(Q_1(x),\dots,Q_r(x))=\omega^{Q_{\a}(x)}$, where $Q_{\a}$ is the degree $d-1$ polynomial $Q_{\a}(x)=\sum_{i\leq r} \a_iQ_i(x)$. So $Q=Q_{\a}$ is a suitable choice.

\subsection{Analytic rank and partition rank of tensors}

Related to the bias and rank of polynomials are the notions of \emph{analytic rank} and \emph{partition rank} of tensors. Recall that if $\F$ is a field and $V_1,\dots,V_d$ are finite dimensional vector spaces over $\F$, then an order $d$ tensor is a multilinear map $T:V_1\times \dots \times V_d\rightarrow \F$. Each $V_k$ can be identified with $\F^{n_k}$ for some $n_k$, and then there exist $t_{i_1,\dots,i_d}\in \F$ for all $i_1\leq n_1,\dots,i_d\leq n_d$ such that $T(v^1,\dots,v^d)=\sum_{i_1\leq n_1,\dots,i_d\leq n_d} t_{i_1,\dots,i_d} v^1_{i_1}\dots v^d_{i_d}$ for every $v^1\in \F^{n_1},\dots,v^d\in \F^{n_d}$ (where $v_k$ is the $k$th coordinate of the vector $v$). Indeed, $t_{i_1,\dots,i_d}$ is just $T(e^{i_1},\dots,e^{i_d})$, where $e^i$ is the $i$th standard basis vector.

The following notion was introduced by Gowers and Wolf \cite{gowerswolf}.

\begin{definition}
	Let $\F$ be a finite field, let $V_1,\dots,V_d$ be finite dimensional vector spaces over $\F$ and let $T:V_1\times \dots \times V_d\rightarrow \F$ be an order $d$ tensor. Then the \emph{analytic rank} of $T$ is defined to be $\arank(T)=-\log_{|\F|} \bias(T)$, where $\bias(T)=\E_{v^1\in V_1,\dots,v^d\in V_d} \lbrack \chi(T(v^1,\dots,v^d))\rbrack$ for any nontrivial character $\chi$ of $\F$.
\end{definition}

\begin{remark} \label{welldefined}
	This is well-defined. Indeed, if $\chi$ is a nontrivial character of $\F$, then
	\begin{align*}
		\E_{v^1\in V_1,\dots,v^d\in V_d} \lbrack \,\chi(T(v^1,\dots,v^d))\rbrack&=\E_{v^1\in V_1,\dots,v^{d-1}\in V_{d-1}}\lbrack \E_{v^d\in V_d} \,\chi(T(v^1,\dots,v^d))\rbrack \\
		&=\P_{v^1\in V_1,\dots,v^{d-1}\in V_{d-1}} \lbrack T(v^1,\dots,v^{d-1},x)\equiv 0\rbrack,
	\end{align*}
	where $T(v^1,\dots,v^{d-1},x)$ is viewed as a function in $x$. The second equality holds because \newline $\E_{v^d\in V_d} \,\chi(T(v^1,\dots,v^d))=0$ unless $T(v^1,\dots,v^{d-1},x)\equiv 0$, in which case it is 1.
	
	Thus, $\E_{v^1\in V_1,\dots,v^d\in V_d} \lbrack \,\chi(T(v^1,\dots,v^d))\rbrack$ does not depend on $\chi$, and is always positive. Moreover, it is at most 1, therefore the analytic rank is always nonnegative.
\end{remark}

\smallskip

A different notion of rank was defined by Naslund \cite{naslund}.

\begin{definition}
	Let $T:V_1\times \dots \times V_d\rightarrow \F$ be a (non-zero) order $d$ tensor. We say that $T$ has \emph{partition rank} 1 if there is some $S\s \lbrack d\rbrack$ with $S\neq \emptyset,S\neq \lbrack d\rbrack$ such that $T(v^1,\dots,v^d)=T_1(v^{i}:i\in S)T_2(v^{i}:i\not \in S)$ where $T_1:\prod_{i\in S}V_i\rightarrow \F,T_2:\prod_{i\not\in S}V_i\rightarrow \F$ are tensors. In general, the partition rank of $T$ is the smallest $r$ such that $T$ can be written as the sum of $r$ tensors of partition rank 1. This number is denoted $\prank(T)$.
\end{definition}

Lovett \cite{lovett} has proved that $\arank(T)\leq \prank(T)$. In the other direction, it follows from the work of Bhowmick and Lovett \cite{bhowmicklovett} that if an order $d$ tensor $T$ has $\arank(T)\leq r$, then $\prank(T)\leq f(r,d)$ for some function $f$. Note that $f$ does not depend on $|\F|$ or the dimension of the vector spaces $V_k$. However, $f$ has an Ackermann-type dependence on $d$ and $r$. We prove a different bound under the same assumptions, which is stronger unless $|\F|$ is very large.

\begin{theorem} \label{analyticpartition}
	Let $T:V_1\times \dots \times V_d\rightarrow \F$ be an order $d$ tensor with $\arank(T)\leq r$. Then
	$$\prank(T)\leq 2^{d-1}\tower_{8|\F|}((d+3)^{d+3}+1,r).$$
\end{theorem}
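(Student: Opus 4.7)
I would proceed by induction on the order $d$. The base case $d = 2$ is immediate, since for a bilinear form both the partition rank and the analytic rank coincide with the matrix rank of the coefficient matrix, so $\prank(T) = \arank(T) \leq r$.

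For the inductive step, assume the theorem for all orders less than $d$ and let $T : V_1 \times \ldots \times V_d \to \F$ satisfy $\arank(T) \leq r$. The starting point is to slice $T$ in the last coordinate: writing $T^{(i)}(v^1,\ldots,v^{d-1}) := T(v^1,\ldots,v^{d-1},e^i)$ for a basis $\{e^i\}$ of $V_d$, multilinearity yields
\[ T(v^1,\ldots,v^d) = \sum_{i=1}^{n_d} v^d_i \, T^{(i)}(v^1,\ldots,v^{d-1}), \qquad \bias(T) = \E_{v^d \in V_d} \bias\Bigl(\sum_i v^d_i\, T^{(i)}\Bigr). \]
A naive application of the inductive hypothesis to slices with high bias does not bound $\prank(T)$, because partition-rank-one decompositions of different slices need not be compatible. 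The main tool is therefore a regularity lemma applied to the linear span $W := \mathrm{span}\{T^{(1)},\ldots,T^{(n_d)}\}$ of $(d-1)$-tensors: iteratively decompose $W = W_{\mathrm{str}} \oplus W_{\mathrm{reg}}$ such that $W_{\mathrm{str}}$ consists of a controlled number of partition-rank-one $(d-1)$-tensors and $W_{\mathrm{reg}}$ is \emph{regular}, in the sense that every nonzero combination has analytic rank at least a threshold $R$. Whenever $W_{\mathrm{reg}}$ is found to contain an element of analytic rank below $R$, the inductive hypothesis converts it to low partition rank, the element is moved to $W_{\mathrm{str}}$, and $R$ is increased.

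The tensor $T$ itself then decomposes into a structured part (a small number of partition-rank-one $d$-tensors of the form $\ell(v^d) \cdot S(v^1,\ldots,v^{d-1})$ with $\prank(S)=1$) plus a contribution from $W_{\mathrm{reg}}$. Subadditivity of analytic rank combined with the regularity of $W_{\mathrm{reg}}$ forces this second contribution to be either trivial or of analytic rank that is still tower-bounded in terms of $r$, which again by the inductive hypothesis gives low partition rank. Quantitatively, each round of regularization invokes the inductive bound at order $d-1$, which is a tower of height roughly $(d+2)^{d+2}$, and the threshold $R$ must be raised in $O(d)$ nested passes, producing a tower of height $(d+3)^{d+3}+1$ for the inductive step; the factor $2^{d-1}$ arises from the repeated two-part (structured/regular) splitting across these nested passes.

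The main obstacle is to execute the regularization so that the tower height grows by only a polynomial factor in $d$ per induction step. This is the crucial improvement over the Bhowmick--Lovett argument, which loses exponentially at each step and therefore produces an Ackermann-type bound; here the careful choice of auxiliary thresholds should ensure that every invocation of the inductive hypothesis happens at a value of $r$ that is only tower-bounded in the original $r$, yielding the stated bound $2^{d-1}\tower_{8|\F|}((d+3)^{d+3}+1,r)$.
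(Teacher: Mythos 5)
Your proposal takes a genuinely different route from the paper, but it contains a gap precisely at the point where the paper does its real work.

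What the paper actually does: it does not iterate a regularity/structure-vs-pseudorandomness decomposition on the span of slices. Instead, the engine is Lemma \ref{mainlemma}, proved by induction on $d$ (with base case $d=1$ via Bogolyubov, not $d=2$). Given a dense submultiset $\cB'$ of rank-one tensors, that lemma constructs \emph{once and for all} a fixed multiset $Q$ of rank-one-type elements (built from ``$l$-systems'' of nested subspaces, Lemmas \ref{intersect}--\ref{systemspace}) so that any array $r$ satisfying $r.q=0$ for most $q\in Q$ must lie in $\sum_I V_I\otimes \F^{I^c}$ with each $V_I$ of tower-bounded dimension. Theorem \ref{analyticpartition} then follows in one stroke: $\arank(T)\leq r$ means the slices vanish on a $|\F|^{-r}$-dense set, Lemma \ref{biastorank} applies $Q$ to the slices, and $t$ is shown to be $k$-degenerate. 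The factor $2^{d-1}$ is simply the number of nonempty $I\subset[d-1]$, since a $k$-degenerate tensor is a sum of at most $2^{d-1}$ terms $H_I\otimes H_{I^c}$, each of partition rank $\leq k$ — it does not come from ``repeated two-part splitting.''

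The gap in your proposal is the sentence ``the careful choice of auxiliary thresholds should ensure that every invocation of the inductive hypothesis happens at a value of $r$ that is only tower-bounded in the original $r$.'' That is not an execution detail; it is the entire difficulty, and the regularity scheme as you describe it does not obviously admit such a choice. In the iterated structure/regular decomposition, the threshold $R$ for the regular part must exceed the partition-rank bound returned by the inductive hypothesis on whatever has been moved into $W_{\mathrm{str}}$, and the number of passes is governed by an energy/rank increment (potentially as large as $\dim W$, i.e.\ $n_d$) rather than $O(d)$. Each pass feeds the output of the inductive bound back into the next threshold, which is exactly the recursive composition that produces Ackermann-type growth in Green--Tao and Bhowmick--Lovett. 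Your sketch gives no mechanism for breaking this feedback loop; it simply asserts the conclusion. The paper's contribution is to replace the iterated decomposition by a one-shot ``test set'' $Q$ and a carefully ordered merging step (Lemma \ref{keylemma}, processing the index sets $I$ in a $\subset$-compatible total order $\prec$) that upgrades the $r$-dependent witnessing subspaces $H_{J^c}(r)$ to $r$-independent subspaces $U_J,U_{J^c}$ with only a bounded (tower-of-fixed-extra-height) loss at each of the $2^{d-1}$ steps. Without something playing this role, your proposal does not establish the claimed tower bound.
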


It is not hard to see that Theorem \ref{analyticpartition} implies Theorem \ref{largenormlowrank}. Indeed, let $P$ be a polynomial $\F^n\rightarrow \F$ of degree $d<\text{char}(\F)$, let $f(x)=\chi(P(x))$ and assume that $\|f\|_{U^d}\geq c>0$. Define $T:(\F^n)^d \rightarrow \F$ by $T(y_1,\dots,y_d)=\sum_{S\s \lbrack d\rbrack} (-1)^{d-|S|}P(\sum_{i\in S} y_i)$. By Lemma 2.4 from \cite{gowerswolf}, $T$ is a tensor of order $d$. Moreover, by the same lemma, we have $T(y_1,\dots,y_d)=\sum_{S\s \lbrack d\rbrack} (-1)^{d-|S|}P(x+\sum_{i\in S} y_i)$ for any $x\in \F^n$. Thus, $$\bias(T)=\E_{y_1,\dots,y_d\in \F^n} \, \chi(T(y_1,\dots,y_d))=\E_{y_1,\dots,y_d\in \F^n} \prod_{S\s \lbrack d\rbrack} \cC^{d-|S|}f(x+\sum_{i\in S} y_i)$$ for any $x\in \F^n$. By averaging over all $x\in \F^n$, it follows that $\bias(T)=\|f\|^{2^d}_{U^d}\geq c^{2^d}$. Thus, $\arank(T)\leq 2^d\log_{|\F|}(1/c)$. Therefore, by Theorem \ref{analyticpartition}, we get
\begin{equation} \label{eqnprank}
	\prank(T)\leq 2^{d-1}\tower_{8|\F|}\big((d+3)^{d+3}+1,2^d\log_{|\F|}(1/c)\big)= 2^{d-1}\tower_{8|\F|}\big((d+3)^{d+3},(1/c)^{2^d}\big).
\end{equation}
Note that $T(y_1,\dots,y_d)=D_{y_1}\dots D_{y_d}P(x)$ where $D_yg(x)=g(x+y)-g(x)$. Thus, by Taylor's approximation theorem, since $d<\text{char}(\F)$, we get $P(x)=\frac{1}{d!}D_x \dots D_x P(0)+W(x)=\frac{1}{d!}T(x,\dots,x)+W(x)$ for some polynomial $W$ of degree at most $d-1$.



By equation (\ref{eqnprank}), $T$ can be written as a sum of at most $2^{d-1}\tower_{8|\F|}((d+3)^{d+3},(1/c)^{2^d})$ tensors of partition rank 1. Hence, $P_1$ can be written as a sum of at most $2^{d-1}\tower_{8|\F|}((d+3)^{d+3},(1/c)^{2^d})$ expressions of the form $QR$ where $Q,R$ are polynomials of degree at most $d-1$ each. Thus, $P-W$ has rank at most $2\cdot2^{d-1}\tower_{8|\F|}((d+3)^{d+3},(1/c)^{2^d})$, and therefore $P$ has rank at most $$2\cdot2^{d-1}\tower_{8|\F|}((d+3)^{d+3},(1/c)^{2^d})+1.$$

\section{The proof of Theorem \ref{analyticpartition}}

\subsection{Notation}

In the rest of the paper, we identify $V_i$ with $\F^{n_i}$. Thus, the set of all tensors $V_1\times \dots \times V_d\rightarrow \F$ is the tensor product $\F^{n_1}\otimes \dots \otimes \F^{n_d}$, which will be denoted by $\cG$ throughout this section. Also, $\cB$ will always stand for the multiset $\{u_1\otimes\dots\otimes u_d:u_i\in \F^{n_i} \text{ for all } i\}$. Note that $\cG=\F^{n_1}\otimes\dots\otimes\F^{n_d}$ can be viewed as the set of $d$-dimensional $(n_1,\dots,n_d)$-arrays over $\F$ which in turn can be viewed as $\F^{n_1n_2\dots n_d}$, equipped with the entry-wise dot product.

For $I\s \lbrack d\rbrack$, we write $\F^{I}$ for $\bigotimes_{i\in I} \F^{n_i}$ so that we naturally have $\cG=\F^{I}\otimes \F^{I^c}$, where $I^c$ always denotes $\lbrack d\rbrack \setminus I$.

If $r\in \F^{\lbrack d\rbrack}=\cG$ and $s\in \F^{\lbrack k\rbrack}$ (for some $k\leq d$), then we define $rs$ to be the tensor in $\F^{\lbrack k+1,d\rbrack}$ with coordinates $(rs)_{i_{k+1},\dots,i_d}=\sum_{i_1\leq n_1,\dots,i_k\leq n_k} r_{i_1,\dots,i_d}s_{i_1,\dots,i_k}$. If $k=d$, then $rs$ is the same as the entry-wise dot product $r.s$. Also, note that viewing $r$ as a $d$-multilinear map $R:\F^{n_1}\times \dots \times \F^{n_d}\rightarrow \F$, we have $R(v^1,\dots,v^d)=\sum_{i_1\leq n_i,\dots,i_d\leq n_d} r_{i_1,\dots,i_d}v^1_{i_1}\dots v^d_{i_d}=r(v^1\otimes \dots \otimes v^d)$.

Finally, we use a non-standard notation and write $kB$ to mean the set of elements of $\cG$ which can be written as a sum of \textit{at most} $k$ elements of $B$, where $B$ is some fixed (multi)subset of $\cG$, and similarly, we write $kB-lB$ for the set of elements that can be obtained by adding at most $k$ members and subtracting at most $l$ members of $B$.

\subsection{The main lemma and some consequences}

Theorem \ref{analyticpartition} will follow easily from the next lemma, which is the main technical result of this paper. See \cite{gowersjanzer} for another application of this lemma.

\begin{lemma} \label{mainlemma}
	Let $f_1(d)=2^{3^{d+3}}$, $f_2(d)=2^{-3^{d+3}}$ and $f_3(d,\d)=\tower_{8|\F|}((d+4)^{d+4},1/\d)$. If $\cB'\subset \cB$ is a multiset such that $|\cB'|\geq \d |\cB|$, then there exists a multiset $Q$ whose elements are chosen from $f_1(d)\cB'-f_1(d)\cB'$ (but with arbitrary multiplicity) with the following property. The set of arrays $r\in \cG$ with $r.q=0$ for at least $(1-f_2(d))|Q|$ choices $q\in Q$ is contained in $\sum_{I\subset \lbrack d\rbrack, I\neq \emptyset} V_I\otimes \F^{I^c}$ for subspaces $V_I\subset \F^{I}$ of dimension at most $f_3(d,\d)$.
\end{lemma}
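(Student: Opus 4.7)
The plan is to prove Lemma \ref{mainlemma} by induction on the order $d$. For the base case $d=1$, $\mathcal{B}' \subseteq \F^{n_1}$ is a dense multisubset of density $\delta$, and the conclusion is simply that $r$ lies in a subspace of dimension at most $f_3(1,\delta)$ in $\F^{n_1}$. Taking $Q$ to be $\mathcal{B}'$ (with a large number of copies so that the $f_2(1)$ fraction is tolerable), any $r$ near-orthogonal to $Q$ is essentially orthogonal to a positive-density subset of $\mathcal{B}'$, and hence by a Bogolyubov / Fourier argument lies in the orthogonal complement of a subspace of codimension $O(\log_{|\F|}(1/\delta))$, well within the tower bound.

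For the inductive step, I would slice in the last coordinate: for each $u_d \in \F^{n_d}$, set $\mathcal{B}'_{u_d} = \{u_1 \otimes \dots \otimes u_{d-1} : u_1 \otimes \dots \otimes u_d \in \mathcal{B}'\}$. By averaging, a $\delta/2$-fraction of the $u_d$'s are ``good'', meaning $|\mathcal{B}'_{u_d}| \geq (\delta/2)|\mathcal{B}_{d-1}|$. Apply the inductive hypothesis to each such slice with density parameter $\delta/2$, obtaining multisets $Q_{u_d}$ drawn from $f_1(d-1)\mathcal{B}'_{u_d} - f_1(d-1)\mathcal{B}'_{u_d}$. Form $Q$ by taking, for every good $u_d$, the tensors $q' \otimes u_d$ for $q' \in Q_{u_d}$ (with appropriate multiplicities); these lie in $f_1(d)\mathcal{B}' - f_1(d)\mathcal{B}'$ as long as $f_1(d) \geq f_1(d-1)$. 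Then an $r$ that is near-orthogonal to $Q$ has, by an averaging / Markov step, most of its slices $r_{u_d}$ near-orthogonal to $Q_{u_d}$, so by the inductive hypothesis each such $r_{u_d}$ lies in $\sum_{\emptyset \neq J \subseteq [d-1]} V_{J,u_d} \otimes \F^{J^c \cap [d-1]}$ with $\dim V_{J,u_d} \leq f_3(d-1, \delta/2)$.

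The crux of the argument (and the source of the tower-type loss) is stitching these slice-wise structures into a single tensor-level decomposition. For each $J \subseteq [d-1]$, I would try to show that the spaces $V_{J,u_d}$ are essentially independent of $u_d$ up to a bounded correction that gets absorbed into $V_{J \cup \{d\}}$. Concretely, run a dimension-increment argument: if all $V_{J,u_d}$ are contained in a common subspace $V_J \subseteq \F^J$ of bounded dimension, set that to be our $V_J$; otherwise, find a direction in $\F^{n_d}$ whose slice yields a new vector in some $V_J$, and push that direction into $V_{J \cup \{d\}}$. Because each $V_{J,u_d}$ already has dimension at most $f_3(d-1, \delta/2)$, only a bounded number of such increments can occur at each level, and iterating through all nonempty $J$ gives a bound of $f_3(d,\delta)$ on every $\dim V_I$.

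The main obstacle will be Step~3, namely producing uniform subspaces $V_I$ from the slice-wise subspaces $V_{J,u_d}$ while controlling both the $f_2$ error parameter and the dimension growth. The care required is that the iterative dimension-increment argument compounds the loss in $\delta$ at every step (each increment effectively replaces $\delta$ by some $\delta' \approx f_2(d)\cdot \delta$ or by $|\F|^{-f_3(d-1,\cdot)}$), which is exactly why the final bound has to be of tower form with height roughly $(d+4)^{d+4}$ and base $8|\F|$. The role of permitting $Q$ to be drawn from $f_1(d)\mathcal{B}' - f_1(d)\mathcal{B}'$ rather than from $\mathcal{B}'$ itself is precisely to give the freedom to form these differences in the increment step without leaving the allowed test set, and the choice $f_1(d) = 2^{3^{d+3}}$ accommodates the doubling that occurs at each of the $O(d)$ layers of the induction.
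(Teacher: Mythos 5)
Your base case diverges from the paper's in a way that matters. The paper takes $Q=U$, where $U$ is the subspace of codimension at most $1/\delta^2$ inside $2\cB'-2\cB'$ supplied by Bogolyubov's lemma; then the cleanness of a subspace gives the dichotomy that a nonzero $r$ vanishes on exactly a $1/|\F|$ fraction of $U$, so ``nearly orthogonal'' forces $r\in U^\perp$. Taking $Q=\cB'$ directly, as you propose, makes the conclusion substantially murkier: the set of $r$ vanishing on a $(1-f_2(1))$-fraction of $\cB'$ need not be a subspace, and bounding the dimension of its span requires an argument that only works while $m f_2(1)<1$, giving a bound of the shape $\min(\log_{|\F|}(2/\delta),\,1/f_2(1))$ rather than the paper's $1/\delta^2$. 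That can be shoehorned under $f_3(1,\delta)$, but the move to a Bogolyubov subspace is not optional window-dressing — it is what makes the quantitative bookkeeping go through cleanly in the induction.

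The serious gap is your inductive step. You slice by the last coordinate, apply the inductive hypothesis to each good slice $\cB'_{u_d}$, and then claim to ``stitch'' the slice-wise subspaces $V_{J,u_d}$ into uniform $V_I$ via a ``dimension-increment argument''. No such argument exists in the form you sketch. The subspaces $V_{J,u_d}$ produced by the inductive hypothesis can vary arbitrarily with $u_d$ — there are $|\F|^{n_d}$ choices of $u_d$ and nothing constraining the $V_{J,u_d}$ to sit inside a bounded-dimensional envelope — and near-orthogonality of $r$ to the union $\bigcup_{u_d}(Q_{u_d}\otimes u_d)$ only imposes slice-wise constraints, which do not coordinate across slices. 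The statement ``only a bounded number of such increments can occur at each level'' is precisely the unproved content of the lemma, not a step you get for free from the inductive hypothesis. This is the crux of the whole paper, and it is resolved by a genuinely different mechanism: the multiset $Q$ is built as $\bigcup_{\emptyset\neq I\subseteq[d-1]}Q_I$, where each $Q_I$ is a tensor product $\{s\otimes t:s\in Q',\ t\in Q_s\}$ constructed from $l$-systems (Lemmas \ref{findsystem}, \ref{systemspace}, \ref{intersect}) with the inductive hypothesis applied to $\F^I$ and to $\F^{I^c}$ separately, and crucially with $Q'$ chosen inside $\bigcap_{J\subsetneq I,J\neq\emptyset}(W_J^\perp\otimes\F^{I\setminus J})$ so that it \emph{ignores} the pieces of $r$ already absorbed into fixed subspaces. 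The ``stitching'' is then carried out one $I$ at a time along a total order $\prec$ refining inclusion (Lemma \ref{keylemma}), converting the $r$-dependent subspace $H_{I^c}(r)$ into $r$-independent $U_I,U_{I^c}$ while the others grow in a controlled way. None of this architecture — the $l$-systems, the $W_J^\perp$ annihilation condition, the iteration over subsets of $[d-1]$, the initial reduction to ``$r=x+y$ with $x$ in a fixed small subspace and $y$ degenerate'' via Lemma \ref{focusondeg} — appears in your proposal, and your Step~3 as stated would not close.
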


The proof of this lemma goes by induction on $d$. In what follows, we shall prove results conditional on the assumption that Lemma \ref{mainlemma} has been verified for all $d'<d$. Eventually, we will use these results to prove the induction step.

\begin{definition}
	Let $k$ be a positive integer. We say that $r\in \cG$ is \emph{$k$-degenerate} if for every $I\s \lbrack d\rbrack,I\neq \emptyset,I\neq \lbrack d\rbrack$, there exists a subspace $H_I\s \F^{I}$ of dimension at most $k$ such that $r\in \sum_{I\subset \lbrack d-1 \rbrack,I\neq \emptyset}H_I\otimes H_{I^c}$.
\end{definition}

If $r\in H_I\otimes \F^{I^c}$ with $\dim(H_I)\leq k$, then $r\in H_I\otimes H_{I^c}$ for some $H_{I^c}\s \F^{I^c}$ of dimension at most $k$. (This follows by writing $r$ as $\sum_{j\leq m} s_j\otimes t_j$ with $\{s_j\}$ a basis for $H_I$ and letting $H_{I^c}$ be the span of all the $t_j$.) Thus, $r$ is $k$-degenerate if and only if $r\in \sum_{I\s \lbrack d-1 \rbrack, I\neq \emptyset} H_I\otimes \F^{I^c}$ for some $H_I\s \F^{I}$ of dimension at most $k$, or equivalently, if and only if $r\in \sum_{I\s \lbrack d-1 \rbrack, I\neq \emptyset} \F^{I}\otimes H_{I^c}$ for some $H_{I^c}\s \F^{I^c}$ of dimension at most $k$. Moreover, note that if $r$ is $k$-degenerate, then $\prank(r)\leq 2^{d-1}k$. This is because if $I\neq \emptyset,I\s \lbrack d-1 \rbrack$ and $w\in H_I\otimes H_{I^c}$ for subspaces $H_I\s \F^{I}$ and $H_{I^c}\s \F^{I^c}$ of dimension at most $k$, then $w=\sum_{i\leq k} s_i\otimes t_i$ for some $s_i\in H_{I}$, $t_i\in H_{I^c}$. But clearly, $s_i\otimes t_i$ has partition rank 1.

\begin{lemma} \label{biastorank}
	Suppose that Lemma \ref{mainlemma} has been proved for $d'=d-1$. Let $r\in \cG$ be such that $r(v_1\otimes \dots \otimes v_{d-1})=0\in \F^{n_d}$ for at least $\d|\F|^{n_1\dots n_{d-1}}$ choices $v_1\in \F^{n_1},\dots,v_{d-1}\in \F^{n_{d-1}}$. Then $r$ is $f$-degenerate for $f=\tower_{8|\F|}((d+3)^{d+3},1/\d)$.
\end{lemma}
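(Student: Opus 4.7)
The plan is to apply Lemma \ref{mainlemma} at order $d-1$ to the rank-one $(d-1)$-tensors that annihilate $r$ through its first $d-1$ arguments, and then to transfer the conclusion back to $r$ slice by slice in the last coordinate. Set $\cG'=\F^{n_1}\otimes\dots\otimes\F^{n_{d-1}}$ and let $\cB'\subset\cG'$ be the corresponding multiset of rank-one tensors. Define $\cB''\subset \cB'$ to be the sub-multiset of those $v_1\otimes\dots\otimes v_{d-1}$ with $r(v_1\otimes\dots\otimes v_{d-1})=0\in\F^{n_d}$; the hypothesis reads $|\cB''|\geq \d|\cB'|$. Using the assumed case of Lemma \ref{mainlemma} at order $d-1$, I obtain a multiset $Q\subset f_1(d-1)\cB''-f_1(d-1)\cB''$ such that any $r'\in \cG'$ with $r'\cdot q=0$ for at least $(1-f_2(d-1))|Q|$ elements $q\in Q$ lies in $\sum_{I\subset[d-1],\,I\neq\emptyset} V_I\otimes\F^{I^c}$ for some subspaces $V_I\subset \F^I$ of dimension at most $f_3(d-1,\d)=\tower_{8|\F|}((d+3)^{d+3},1/\d)$.

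I then pass to slices. Write $r=\sum_{j\leq n_d} r_j\otimes e_j$, where $r_j\in\cG'$ is the $j$th slice along the last coordinate and $e_j$ is the $j$th standard basis vector of $\F^{n_d}$. The defining condition of $\cB''$ is precisely $r_j\cdot s=0$ for every $s\in \cB''$ and every $j$; by linearity, $r_j\cdot q=0$ for every $q\in Q$ and every $j$. The main lemma's conclusion therefore applies to each slice $r_j$ and, crucially, places all of them in the \emph{same} space $\sum_{I\subset[d-1],\,I\neq\emptyset} V_I\otimes\F^{I^c}$, since $Q$ (and hence the $V_I$) is determined by $\cB''$ before any individual slice is examined.

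It remains to reassemble. Decomposing $r_j=\sum_I s_I^{(j)}$ with $s_I^{(j)}\in V_I\otimes\F^{[d-1]\setminus I}$ and tensoring with $e_j$ gives $r=\sum_I\big(\sum_j s_I^{(j)}\otimes e_j\big)$, with each summand lying in $V_I\otimes\F^{[d-1]\setminus I}\otimes\F^{n_d}=V_I\otimes\F^{[d]\setminus I}$. This is exactly the definition of $r$ being $f_3(d-1,\d)$-degenerate, and since $(d-1)+4=d+3$ the bound $f_3(d-1,\d)=\tower_{8|\F|}((d+3)^{d+3},1/\d)$ matches the claim. The substantive content has been outsourced to Lemma \ref{mainlemma}; the only obstacle here is careful bookkeeping --- ensuring that $Q$ is selected once and serves all $n_d$ slices uniformly, and that passing slice information in $\cG'$ back up to $\cG$ does not enlarge the $V_I$.
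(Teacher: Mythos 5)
Your proof is correct and takes essentially the same route as the paper's: decompose $r$ by slices along the last coordinate, observe that every slice annihilates the entire multiset $\cD'$ (your $\cB''$) and hence all of $Q$ by linearity, apply the order-$(d-1)$ case of Lemma \ref{mainlemma} to place each slice in the same space $\sum_I V_I\otimes\F^{[d-1]\setminus I}$, and tensor back up with the basis of $\F^{n_d}$. The only cosmetic difference is that the paper writes $r=\sum_i s_i\otimes t_i$ for an abstract basis $\{t_i\}$ of $\F^{n_d}$ rather than your explicit $e_j$; the bookkeeping you flag (that $Q$ and the $V_I$ are chosen once for all slices) is handled identically there.
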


\begin{proof}
	Write $r=\sum_i s_i\otimes t_i$ where $s_i\in \F^{\lbrack d-1\rbrack}$ and $\{t_i\}_i$ is a basis for $\F^{n_d}$. Let $\cD$ be the multiset $\{u_1\otimes \dots \otimes u_{d-1}:u_1\in \F^{n_1},\dots,u_{d-1}\in\F^{n_{d-1}}\}$ and $\cD'=\{w\in \cD: rw=0\}$. Since $|\cD'|\geq \d|\cD|$, by Lemma \ref{mainlemma} there is a multiset $Q$ with elements from $2^{3^{d+2}}\cD'$ such that the set of arrays $r'\in \F^{\lbrack d-1\rbrack}$ with $r'.q=0$ for all choices $q\in Q$ is contained in some $\sum_{I\s \lbrack d-1\rbrack,I\neq \emptyset} V_I\otimes \F^{\lbrack d-1\rbrack \setminus I}$, where $\dim(V_I)\leq \tower_{8|\F|}((d+3)^{d+3},1/\d)$. Note that for every $i$ we have $s_i.w=0$ for all $w\in \cD'$ and so also $s_i.q=0$ for all $q\in Q$. Thus, $r\in \sum_{I\s \lbrack d-1\rbrack,I\neq \emptyset} V_I\otimes \F^{I^c}$.
\end{proof}

Now we are in a position to prove Theorem \ref{analyticpartition} conditional on Lemma \ref{mainlemma}.

\begin{proof}[Proof of Theorem \ref{analyticpartition}]
	Let $T:\F^{n_1}\times \dots \times \F^{n_d}\rightarrow \F$ be an order $d$ tensor with $\arank(T)\leq r$. By Remark \ref{welldefined}, we have $\P_{v_1\in \F^{n_1},\dots,\F^{n_{d-1}}\in V_{d-1}}\lbrack T(v_1,\dots,v_{d-1},x)\equiv 0\rbrack\geq |\F|^{-r}$. Writing $t$ for the element in $\cG$ corresponding to $T$, we get that $t(v_1\otimes \dots \otimes v_{d-1}\otimes x)\equiv 0$ as a function of $x$ for at least $\d|\F|^{n_1\dots n_d}$ choices $v_1\in \F^{n_1},\dots,v_{d-1}\in \F^{n_{d-1}}$, where $\d=|\F|^{-r}$. But $t(v_1\otimes \dots \otimes v_{d-1}\otimes x)=\big(t(v_1\otimes \dots \otimes v_{d-1})\big).x$, so we have $t(v_1\otimes \dots \otimes v_{d-1})=0$ for all these choices of $v_i$. Thus, by Lemma \ref{biastorank}, $t$ is $f$-degenerate for $f=\tower_{8|\F|}((d+3)^{d+3},|\F|^r)$. Hence, $\prank(T)\leq 2^{d-1}\tower_{8|\F|}((d+3)^{d+3},|\F|^r)= 2^{d-1}\tower_{8|\F|}((d+3)^{d+3}+1,r)$.
\end{proof}

Let us continue the preparation for the induction step in Lemma \ref{mainlemma}.

\begin{lemma} \label{focusondeg}
	Suppose that Lemma \ref{mainlemma} has been proved for $d'=d-1$. Let $\cB'\s \cB$ be such that $|\cB'|\geq \d |\cB|$ for some $\d>0$. Then there exist some $Q\subset 2\cB'-2\cB'$ and a subspace $V_{\lbrack d\rbrack}\s \F^{\lbrack d\rbrack}$ of dimension at most $5|\F|^{4/\d^2}$ with the following property. Any array $r$ with $r.q=0$ for at least $\frac{3}{4}|Q|$ choices $q\in Q$ can be written as $r=x+y$ where $x\in V_{\lbrack d \rbrack}$ and $y$ is $f$-degenerate for $f=\tower_{8|\F|}((d+3)^{d+3}+3,1/\d)$.
\end{lemma}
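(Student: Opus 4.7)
The plan is to choose $Q = \cB' - \cB'$ as a multiset of size $|\cB'|^2$ (which is clearly contained in $2\cB' - 2\cB'$) and reformulate the near-orthogonality hypothesis as a one-dimensional bias statement via Cauchy--Schwarz, before invoking Lemma \ref{biastorank} (where the inductive assumption on Lemma \ref{mainlemma} for $d' = d - 1$ enters).

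The first step is the identity
$$\frac{|\{q \in Q : r.q = 0\}|}{|Q|} \;=\; \Pr_{b_1, b_2 \in \cB'}(r.b_1 = r.b_2) \;=\; \sum_{\alpha \in \F} p(\alpha)^2, \qquad p(\alpha) := \Pr_{b \in \cB'}(r.b = \alpha),$$
which combined with $\sum_\alpha p(\alpha)^2 \leq \max_\alpha p(\alpha)$ turns the hypothesis that $r.q = 0$ for at least $3|Q|/4$ of $q \in Q$ into the existence of some $\alpha_0 \in \F$ with $p(\alpha_0) \geq 3/4$. Next, since $u_d \mapsto r.(u_1 \otimes \cdots \otimes u_d)$ is $\F$-linear, any nonzero value $\alpha_0$ is attained on exactly a $1/|\F|$-fraction of $u_d \in \F^{n_d}$ (or nowhere); so $p(\alpha_0) \geq 3/4$ with $\alpha_0 \neq 0$ forces $\d \leq 4/(3|\F|)$. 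In the main regime $\d > 4/(3|\F|)$ we must have $\alpha_0 = 0$, and hence $r.(u_1 \otimes \cdots \otimes u_d) = 0$ on an $\Omega(\d)$-dense set of parameters. A Fubini step re-using the linearity in $u_d$, exactly as in the proof of Theorem \ref{analyticpartition}, then produces an $\Omega(\d)$-dense set of $(u_1, \ldots, u_{d-1})$ on which $r \cdot (u_1 \otimes \cdots \otimes u_{d-1}) = 0 \in \F^{n_d}$. At this point Lemma \ref{biastorank} makes $r$ into a $\tower_{8|\F|}((d+3)^{d+3}, O(1/\d))$-degenerate tensor; this lies well inside $f = \tower_{8|\F|}((d+3)^{d+3}+3, 1/\d)$ (the three extra levels of height provide ample slack), and one may take $V_{[d]} = \{0\}$ in this regime.

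The remaining regime $\d \leq 4/(3|\F|)$ is where the dimension bound $5|\F|^{4/\d^2}$ is essential and constitutes the main technical obstacle. Here nonzero values of $\alpha_0$ are possible, and $V_{[d]}$ must absorb the resulting ``affine offset''. The natural plan is, for each $\alpha_0 \in \F \setminus \{0\}$ and each admissible dense sub-multiset $\cB'' \subseteq \cB'$ (of density at least $3\d/4$), to fix a representative tensor $r_0(\alpha_0, \cB'')$ with $r_0.b = \alpha_0$ on $\cB''$, and to let $V_{[d]}$ be the linear span of these representatives; an arbitrary $r$ satisfying the hypothesis then differs from some $r_0 \in V_{[d]}$ by a tensor annihilating $\cB''$, which is degenerate by the same argument as above. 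The delicate point---and the main obstacle---is bounding the number of required representatives by $O(|\F|^{4/\d^2})$, which is a Bogolyubov-style counting step that exploits both the smallness of $\d$ and the structure of $\cB$ as the Segre variety of pure tensors.
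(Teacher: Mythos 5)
There is a genuine gap, and it lies exactly where you flag it, but the obstruction is more structural than you seem to realize.

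Your large-$\delta$ regime is roughly correct modulo thresholds: with $Q=\cB'-\cB'$ the Cauchy--Schwarz identity and the fact that a nonzero value $\alpha_0$ is taken by $b\mapsto r.b$ on at most a $1/|\F|$-fraction of the ambient $\cB$ (so $p(\alpha_0)\le 1/(\delta|\F|)$, forcing $\alpha_0=0$ when $\delta$ is large) do work, and after a slice-by-slice linearity count you get a set of $(u_1,\ldots,u_{d-1})$ of density at least $3\delta/4-1/|\F|$ on which $r(u_1\otimes\cdots\otimes u_{d-1})=0$. You'd want to move your threshold (e.g.\ to $\delta>4/|\F|$) so this density is genuinely $\Omega(\delta)$; near $\delta=4/(3|\F|)$ it degenerates.

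The real trouble is the small-$\delta$ regime, and I don't think your plan for it can be made to close with $Q=\cB'-\cB'$. Your idea is to fix representatives $r_0$ and argue that an arbitrary admissible $r$ differs from some $r_0$ by a tensor annihilating a dense sub-multiset $\cB''\subset\cB'$, and that this difference is therefore degenerate ``by the same argument as above.'' That last step breaks: if $\delta\le 4/(3|\F|)$, then $\cB''$ has density only $\Theta(\delta)<1/|\F|$ in $\cB$, and the slice-wise linearity argument gives nothing --- the entire count can be absorbed by slices where $r-r_0$ vanishes on merely a $1/|\F|$-fraction of $u_d$, so you cannot produce a dense set of slices where $r-r_0$ vanishes identically, and Lemma~\ref{biastorank} is out of reach. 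In addition, the proposed bound of $O(|\F|^{4/\delta^2})$ representatives is asserted, not proved, and the number of candidate pairs $(\alpha_0,\cB'')$ is astronomically larger than that.

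The paper avoids both problems, and also avoids your regime split entirely, by a different choice of $Q$: it fibers $Q$ over $\cD'=\{t\in\cD: t\otimes u\in\cB'\text{ for many }u\}$ as $Q=\bigcup_{t\in\cD'}(t\otimes U_t)$, where each $U_t\subset\F^{n_d}$ is a Bogolyubov subspace (codimension $\le 4/\delta^2$) with $t\otimes U_t\subset 2\cB'-2\cB'$. The crucial payoff of this structure is that the hypothesis $r.q=0$ for most $q\in Q$ forces $r_i t\in U_t^\perp$ for at least half the $t\in\cD'$, and $|U_t^\perp|\le|\F|^k$ is a small target. A double pigeonhole on a maximal family $r_1,\dots,r_m$ (pairwise differences not degenerate) then yields, for $m=5|\F|^k$, indices $i\ne j$ with $r_it=r_jt$ (equality, not just both small) on a $\delta/(8m^2)$-dense set of $t\in\cD$, whence $r_i-r_j$ vanishes on a dense set of slices and Lemma~\ref{biastorank} applies cleanly without any lower bound on $\delta$. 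Your $Q=\cB'-\cB'$ has no such fibered structure, so there is no small target for a pigeonhole, and the degeneracy of differences cannot be extracted this way. To repair the proposal you would in effect have to replace $Q$ by the paper's Bogolyubov-fibered construction, at which point the regime split and the Cauchy--Schwarz step become unnecessary.
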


\begin{proof}	
	Let $\cD$ be the multiset $\{u_1\otimes \dots \otimes u_{d-1}:u_1\in \F^{n_1},\dots,u_{d-1}\in\F^{n_{d-1}}\}$ and $\cD'=\{t\in \cD: t\otimes u\in \cB' \text{ for at least } \frac{\d}{2}|\F|^{n_d} \text{ choices } u\in \F^{n_d}\}$. Clearly, we have $|\cD'|\geq \frac{\d}{2}|\cD|$. Moreover, by Bogolyubov's lemma (see, eg. Proposition 4.39 in \cite{taovu}), for every $t\in \cD'$, there exists a subspace $U_t\s \F^{n_d}$ of codimension at most $\frac{1}{(\d/2)^2}$ such that $t\otimes U_t\s 2\cB'-2\cB'$ for every $t\in \cD'$. After passing to suitable subspaces, we may assume that all $U_t$ have the same codimension $k\leq 4/\d^2$. Now let $Q=\cup_{t\in \cD'}(t\otimes U_t)$.
	
	Let $r_1,\dots,r_m\in \cG$ have the properties that
	
	\begin{enumerate} [label=(\roman*)]
		\item for all $i$, we have $r_i.q=0$ for at least $\frac{3}{4}|Q|$ choices $q\in Q$ and
		
		\item for all $i\neq j$, $r_i-r_j$ is not $f$-degenerate.
	\end{enumerate}

	Note that $r_i.(t\otimes s)=(r_it).s$ for every $s\in U_t$. If $r_it\not \in U_t^{\perp}$, then $(r_it).s=0$ holds for only a proportion $1/|\F|\leq 1/2$ of all $s\in U_t$. Thus, by (i) it follows that for all $i$, we have $r_it\in U_t^{\perp}$ for at least $\frac{1}{2}|\cD'|$ choices $t\in \cD'$.
	
	Suppose that $m=5|\F|^{k}$. By averaging, for at least $\frac{\cD'}{4}$ choices $t\in \cD'$ there are at least $m/4$ choices $i\leq m$ with $r_it\in U_t^{\perp}$. For every such $t$ there exist $i\neq j$ with $r_it=r_jt$ since $|U_t^{\perp}|\leq |\F|^k$. Thus, there exist $i\neq j$ such that $r_it=r_jt$ holds for at least $\frac{|\cD'|}{4m^2}$ choices $t\in \cD'$. Take $\tilde{r}=r_i-r_j$. Then $\tilde{r}t=0$ for at least $\frac{\d}{8m^2}|\cD|$ choices $t\in \cD$. By Lemma \ref{biastorank}, $\tilde{r}$ is $g$-degenerate for $g=\tower_{8|\F|}((d+3)^{d+3},\frac{8m^2}{\d})$.  But $\frac{8m^2}{\d}\leq\frac{200}{\d}|\F|^{8/\d^2}\leq (8|\F|)^{32/\d^2}$. Moreover, $(8|\F|)^{(8|\F|)^{1/\d}}\geq 16^{16^{1/\d}}\geq \frac{32}{\d^2}$. Thus, $\tilde{r}$ is in fact $f$-degenerate, contradicting (ii).
	
	Thus, if $r_1,\dots,r_m$ is a maximal set with properties (i) and (ii), then $m< 5|\F|^k$. Hence we may take $V_{\lbrack d \rbrack}$ to be the span of $r_1,\dots,r_m$ and this satisfies the conclusion of the lemma.
\end{proof}

\begin{remark}
	In the proof above and later in the paper we are using the bound $1/\d^2$ on the codimension of the subspace obtained in Bogolyubov's lemma (where $\d$ is the density of our initial set). This is not the best known bound but this choice is simple and makes no difference in the final bound in Lemma \ref{mainlemma}. Later (see Remark \ref{expensive}) we will highlight the most expensive step of the argument.
\end{remark}

\subsection{Construction of some auxiliary sets}

The next definition describes a type of set that will be useful for us when constructing $Q$ in Lemma \ref{mainlemma}. Its key properties are described in this subsection.

\begin{definition} \label{ksystem}
	Suppose that we have a collection of vector spaces as follows. The first one is $U\s \F^{n_1}$, of codimension at most $l$. Then, for every $u_1\in U$, there is some $U_{u_1}\s \F^{n_2}$. In general, for every $2\leq k\leq d$ and every $u_1\in U,u_2\in U_{u_1},\dots,u_{k-1}\in U_{u_1,\dots,u_{k-2}}$, there is a subspace $U_{u_1,\dots,u_{k-1}}\s \F^{n_k}$. Assume, in addition, that the codimension of $U_{u_1,\dots,u_{k-1}}$ in $\F^{n_k}$ is at most $l$ for every $u_1\in U,\dots,u_{k-1}\in U_{u_1,\dots,u_{k-2}}$. Then the multiset $Q=\{u_1\otimes \dots \otimes u_d: u_1\in U,\dots,u_d\in U_{u_1,\dots,u_{d-1}}\}$ is called an $l$-system.
	
\end{definition}

\begin{lemma} \label{intersect}
	Let $Q$ be an $l$-system and let $Q'$ be a $l'$-system. Then $Q\cap Q'$ contains an $(l+l')$-system.
\end{lemma}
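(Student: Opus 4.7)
The plan is to build the desired $(l+l')$-system inside $Q\cap Q'$ by intersecting the two given systems slice by slice. Write the $l$-system $Q$ using subspaces $U\subset \F^{n_1}$ and $U_{u_1,\dots,u_{k-1}}\subset \F^{n_k}$, and the $l'$-system $Q'$ using subspaces $U'\subset \F^{n_1}$ and $U'_{u_1,\dots,u_{k-1}}\subset \F^{n_k}$. For the candidate $(l+l')$-system, I would define $W=U\cap U'$ and, recursively, for $u_1\in W$ and $u_i\in W_{u_1,\dots,u_{i-1}}$ for $i<k$, set
\[
W_{u_1,\dots,u_{k-1}}=U_{u_1,\dots,u_{k-1}}\cap U'_{u_1,\dots,u_{k-1}}.
\]
Note that this intersection is well-defined because $u_1,\dots,u_{k-1}$ lie in both the $U$-chain and the $U'$-chain.

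The key arithmetic point is just the standard fact that if two subspaces of $\F^{n_k}$ have codimensions $a$ and $b$, then their intersection has codimension at most $a+b$. Applied at each step, this gives $\codim W\leq l+l'$ and $\codim W_{u_1,\dots,u_{k-1}}\leq l+l'$, so the collection $\{W_{u_1,\dots,u_{k-1}}\}$ forms an $(l+l')$-system in the sense of Definition~\ref{ksystem}.

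Finally, it remains to check containment. Given any tuple $(u_1,\dots,u_d)$ with $u_i\in W_{u_1,\dots,u_{i-1}}$, each $u_i$ lies in $U_{u_1,\dots,u_{i-1}}$, so $u_1\otimes\dots\otimes u_d\in Q$; the same tuple also satisfies $u_i\in U'_{u_1,\dots,u_{i-1}}$, so $u_1\otimes\dots\otimes u_d\in Q'$. Hence the constructed $(l+l')$-system is contained in $Q\cap Q'$.

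There is no real obstacle here: the lemma is essentially a bookkeeping statement expressing that an $l$-system is defined by a tree of affine (in fact linear) constraints, each of codimension at most $l$, and that intersecting two such trees multiplies codimensions additively at every node. The only mildly subtle point is ensuring that at each stage the intersecting subspaces $U_{u_1,\dots,u_{k-1}}$ and $U'_{u_1,\dots,u_{k-1}}$ are both defined along the common chain of indices $u_1,\dots,u_{k-1}$, which is automatic from the inductive definition of $W_{u_1,\dots,u_{k-1}}\subseteq U_{u_1,\dots,u_{k-1}}\cap U'_{u_1,\dots,u_{k-1}}$.
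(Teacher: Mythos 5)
Your proposal is correct and matches the paper's argument exactly: both construct the new system by recursively intersecting $U_{u_1,\dots,u_{k-1}}$ with $U'_{u_1,\dots,u_{k-1}}$ along a common chain, and both rely on additivity of codimension under intersection. You even spell out the containment check a bit more explicitly than the paper does, but the underlying idea is the same.
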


\begin{proof}
	Let $Q$ have spaces as in Definition \ref{ksystem} and let $Q'$ have spaces $U'_{u'_1,\dots,u'_{k-1}}$.
	We define an $(l+l')$-system $P$ contained in $Q\cap Q'$ as follows. Let $V=U\cap U'$. Suppose we have defined $V_{v_1,\dots,v_{j-1}}$ for all $j\leq k$. Let $v_1\in V,v_2\in V_{v_1},\dots,v_{k-1}\in V_{v_1,\dots,v_{k-2}}$.  We let $V_{v_1\dots,v_{k-1}}=U_{v_1\dots,v_{k-1}}\cap U'_{v_1\dots,v_{k-1}}$. This is well-defined and has codimension at most $l+l'$ in $\F^{n_k}$. Let $P$ be the $(l+l')$-system with spaces $V_{v_1,\dots,v_{k-1}}$.
\end{proof}

\begin{lemma} \label{findsystem}
	Let $\cB'\subset \cB$ be a multiset such that $|\cB'|\geq \delta |\cB|$. Then there exists an $f_1$-system whose elements are chosen from $f_2\cB'-f_2\cB'$ with $f_1=\frac{16^{d}}{\d^2}$ and $f_2=4^d$.
\end{lemma}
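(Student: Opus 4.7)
The plan is to induct on $d$, peeling off one coordinate at a time using Bogolyubov's lemma together with Lemma \ref{intersect}. The base case $d=1$ is immediate: $\cB = \F^{n_1}$, so $\cB'$ is a subset of $\F^{n_1}$ of density at least $\delta$, and Bogolyubov's lemma yields a subspace $V \subset 2\cB' - 2\cB'$ of codimension at most $1/\delta^2 \leq 16/\delta^2$. A $1$-system is by definition a subspace of the required codimension, so $V$ is an $f_1(1)$-system contained in $f_2(1)\cB' - f_2(1)\cB'$.

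For the inductive step I would first slice on the first coordinate. Writing $A_{u_1} = \{(u_2,\dots,u_d) : u_1 \otimes u_2 \otimes \cdots \otimes u_d \in \cB'\}$, an averaging argument shows that the ``heavy'' set $S_1 = \{u_1 \in \F^{n_1} : |A_{u_1}| \geq (\delta/2)\prod_{i\geq 2}|\F|^{n_i}\}$ has density at least $\delta/2$ in $\F^{n_1}$. Applying Bogolyubov to $S_1$ produces a subspace $U \subset 2S_1 - 2S_1$ of codimension at most $4/\delta^2$, which will serve as the top-level space of the final system. For each $u_1 \in S_1$, $A_{u_1}$ is a density-$\geq \delta/2$ multisubset of the rank-$1$ tensors of order $d-1$, so the inductive hypothesis supplies a $(4 \cdot 16^{d-1}/\delta^2)$-system $Q_{u_1}$ with elements in $4^{d-1}A_{u_1} - 4^{d-1}A_{u_1}$.

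To assemble a coherent $d$-system I would fix, for each $u_1 \in U$, some decomposition $u_1 = a_1 + a_2 - b_1 - b_2$ with $a_j, b_j \in S_1$, and apply Lemma \ref{intersect} three times to the four systems $Q_{a_1}, Q_{a_2}, Q_{b_1}, Q_{b_2}$ to obtain a single $(16^d/\delta^2)$-system $Q'_{u_1}$ contained in their intersection. Taking the subspace hierarchy of $Q'_{u_1}$ to be the levels $2,\dots,d$ of the final system attached to $u_1$ gives a system of level $\max(4/\delta^2, 16^d/\delta^2) = 16^d/\delta^2 = f_1(d)$. For any $q \in Q'_{u_1}$, the identity $u_1 \otimes q = a_1 \otimes q + a_2 \otimes q - b_1 \otimes q - b_2 \otimes q$, combined with $q \in 4^{d-1}A_{a_j} - 4^{d-1}A_{a_j}$ for each $j$, places $u_1 \otimes q$ in $4 \cdot 4^{d-1}\cB' - 4 \cdot 4^{d-1}\cB' = f_2(d)\cB' - f_2(d)\cB'$, as required.

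The main (if modest) obstacle is the coherence issue: the inductive hypothesis returns a separate $(d-1)$-system $Q_{u_1}$ for every heavy first coordinate $u_1 \in S_1$, with unrelated subspace hierarchies, so one cannot attach them directly to $U$. Lemma \ref{intersect} is the technical device that resolves this, allowing one to intersect four inductive systems with only additive level loss and thereby to inherit the containment in $4^{d-1}A_{a_j} - 4^{d-1}A_{a_j}$ for all four summands of $u_1$ simultaneously. The two factors of $4$ that combine to give the constant $16$ per level enter as (a) the density loss $\delta \to \delta/2$ in the extraction of $S_1$, which inflates Bogolyubov's bound by a factor $4$, and (b) the fourfold intersection forced by the $\pm$ decomposition of $u_1$.
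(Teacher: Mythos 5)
Your proposal is correct and follows essentially the same argument as the paper: slice on the first coordinate, extract the density-$\delta/2$ heavy set, apply Bogolyubov there and the inductive hypothesis fibrewise, and then use Lemma \ref{intersect} to reconcile the four inductive systems arising from a fixed four-term decomposition of each $u_1 \in U$. The accounting of levels and the containment in $4^d\cB' - 4^d\cB'$ also match the paper's.
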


\begin{proof}
	The proof is by induction on $d$. The case $d=1$ is an easy application of Bogolyubov's lemma. Suppose that the lemma has been proved for all $d'<d$ and let $\cB'\subset \cB$ be a multiset such that $|\cB'|\geq \delta |\cB|$. Let $\cD$ be the multiset $\{v_2\otimes\dots\otimes v_d: v_2\in \F^{n_2},\dots,v_d\in \F^{n_d}\}$. For each $u\in \F^{n_1}$, let $\cB_u'=\{s\in \cD: u\otimes s\in \cB'\}$ and let $T=\{u\in \F^{n_1}: |\cB_u'|\geq\frac{\delta}{2}|\cD|\}$. By averaging, we have that $|T|\geq\frac{\delta}{2}|\F|^{n_1}$. Now by the induction hypothesis, for every $t\in T$, there exists a $g_1$-system in $\F^{n_2}\otimes \dots \otimes \F^{n_d}$ (whose definition is analogous to the definition of a system in $\F^{n_1}\otimes \dots \F^{n_d}$), called $P_t$, contained in $g_2\cB'_t-g_2\cB'_t$ where $g_1=\frac{16^{d-1}}{(\d/2)^2}$ and $g_2=4^{d-1}$. By Bogolyubov's lemma, $2T-2T$ contains a subspace $U\s \F^{n_1}$ of codimension at most $\frac{1}{(\d/2)^2}$. For each $u\in U$, write $u=t_1+t_2-t_3-t_4$ arbitrarily with $t_i\in T$, and let $Q_u=P_{t_1}\cap P_{t_2}\cap P_{t_3}\cap P_{t_4}$, which is a $g_3$-system with $g_3=4g_1=\frac{16^d}{\d^2}$, by Lemma \ref{intersect}. Thus, $Q=\bigcup_{u\in U} (u\otimes Q_u)$ is indeed an $f_1$-system. Moreover, for any $u\in U,s\in Q_u$, we have $u\otimes s=t_1\otimes s+t_2\otimes s-t_3\otimes s-t_4\otimes s$ for some $t_i\in T$ and $s\in \bigcap_{i\leq 4} P_{t_i}$. Then $t_i\otimes s\in g_2\cB'-g_2\cB'$, therefore $u\otimes s\in 4g_2\cB'-4g_2\cB'$, so the elements of $Q$ are indeed chosen from $f_2\cB'-f_2\cB'$.
\end{proof}

\begin{lemma} \label{systemspace}
	Let $Q$ be a $k$-system and for every non-empty $I\s \lbrack d\rbrack$, let $L_I\s \F^{I}$ be a subspace of codimension at most $l$. Let $T=\bigcap_I(L_I\otimes \F^{I^c})$. Then $Q\cap T$ contains an $f$-system for $f=k+2^dl$.
\end{lemma}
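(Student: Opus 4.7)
The plan is to construct the desired $(k + 2^d l)$-system $P \s Q \cap T$ directly, following the inductive format of Definition \ref{ksystem}, by peeling off the constraints imposed by $T$ layer by layer, according to the index $j = \max I$ at which each subspace $L_I$ first ``sees'' all the relevant coordinates. Writing $Q$ as having spaces $U, U_{u_1}, \dots, U_{u_1, \dots, u_{d-1}}$, the key bookkeeping observation is that constraints coming from $L_I$ with $\max I < j$ will already have been imposed at an earlier layer, while those with $\max I > j$ involve coordinates not yet chosen, so at layer $j$ only the subsets $I \s \lbrack j\rbrack$ containing $j$ need attention.

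Concretely, at layer $j$ I would define
$$V_{u_1, \dots, u_{j-1}} \;=\; U_{u_1, \dots, u_{j-1}} \;\cap\; \bigcap_{\substack{I \s \lbrack j\rbrack \\ j \in I}} \Big\{v \in \F^{n_j} : \bigl(\bigotimes_{i \in I \setminus \{j\}} u_i\bigr) \otimes v \in L_I\Big\}.$$
(For $I = \{j\}$ the empty tensor product is the scalar $1$, so the condition collapses to $v \in L_{\{j\}}$.) For each such $I$, the map $v \mapsto \bigl(\bigotimes_{i \in I \setminus \{j\}} u_i\bigr) \otimes v + L_I$ is linear from $\F^{n_j}$ into $\F^I/L_I$, a quotient of dimension at most $l$; hence every one of the $2^{j-1}$ subsets $I \s \lbrack j\rbrack$ with $j \in I$ contributes codimension at most $l$. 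Combined with the codimension-$k$ bound on $U_{u_1, \dots, u_{j-1}}$, this gives $\codim(V_{u_1, \dots, u_{j-1}}) \le k + 2^{j-1} l \le k + 2^d l$, uniformly in the choices of $u_1, \dots, u_{j-1}$, so the resulting $P$ really is a $(k + 2^d l)$-system.

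The verification that $P \s Q \cap T$ is then routine. The inclusion $P \s Q$ is immediate since $V_{u_1, \dots, u_{j-1}} \s U_{u_1, \dots, u_{j-1}}$ at every layer. For $P \s T$, given any non-empty $I \s \lbrack d\rbrack$ and any $u_1 \otimes \dots \otimes u_d$ arising from $P$, I set $j = \max I$ and observe that the condition enforced at layer $j$ guarantees $\bigotimes_{i \in I} u_i \in L_I$, which in turn yields $u_1 \otimes \dots \otimes u_d \in L_I \otimes \F^{I^c}$. I do not foresee any real obstacle here: the content reduces to the elementary fact that partially contracting a codimension-$l$ subspace against a fixed simple tensor loses codimension at most $l$, together with the bookkeeping assigning each constraint $L_I$ to the single layer $j = \max I$ at which it is first imposed.
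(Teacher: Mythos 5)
Your proof is correct and takes essentially the same approach as the paper's: peel off the constraints $L_I$ layer by layer, assigning each to the layer $j = \max I$, and observe that each one costs codimension at most $l$ at that layer. The paper states this more tersely ("it suffices to prove that ... the codimension ... is at most $l$") whereas you write out the subspaces $V_{u_1,\dots,u_{j-1}}$ and the linear map into $\F^I/L_I$ explicitly, but the argument is the same.
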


\begin{proof}
	Let the spaces of $Q$ be $U_{u_1,\dots,u_{j-1}}$. It suffices to prove that for every $1\leq j\leq d$, and every $u_1\in U,\dots,u_{j-1}\in U_{u_1,\dots,u_{j-2}}$, the codimension of $(u_1\otimes \dots\otimes u_{j-1}\otimes U_{u_1,\dots,u_{j-1}})\cap \bigcap_{I\subset \lbrack j\rbrack, j\in I}(L_I\otimes \F^{\lbrack j\rbrack \setminus I})$ in $u_1\otimes \dots\otimes u_{j-1}\otimes U_{u_1,\dots,u_{j-1}}$ is at most $2^dl$. Thus, it suffices to prove that for every $I\s \lbrack j\rbrack$ with $j\in I$, the codimension of $(u_1\otimes \dots\otimes u_{j-1}\otimes U_{u_1,\dots,u_{j-1}})\cap(L_I\otimes \F^{\lbrack j\rbrack \setminus I})$ in $u_1\otimes \dots\otimes u_{j-1}\otimes U_{u_1,\dots,u_{j-1}}$ is at most $l$. But this is equivalent to the statement that $\big((\bigotimes_{i\in I\setminus \{j\}}u_i)\otimes U_{u_1,\dots,u_{j-1}}\big)\cap L_I$ has codimension at most $l$ in $(\bigotimes_{i\in I\setminus \{j\}}u_i)\otimes U_{u_1,\dots,u_{j-1}}$, which clearly holds.
\end{proof}

\subsection{Finishing the proof of Lemma \ref{mainlemma}}

\begin{definition}
	Let $k$ be a positive integer and let $\e>0$. Let $Q$ be a multiset with elements chosen from $\cG$ (with arbitrary multiplicity). We say that $Q$ is $(k,\a)$-forcing if the set of all arrays $r\in \cG$ with $r.q=0$ for at least $\a|Q|$ choices $q\in Q$ is contained in a set of the from $\sum_{I\subset \lbrack d\rbrack,I\neq \emptyset} V_I\otimes \F^{I^c}$ for some $V_I\s \F^{I}$ of dimension at most $k$.
\end{definition}

We now turn to the main part of the proof of Lemma \ref{mainlemma}. For each $I\s \lbrack d-1 \rbrack$ we will construct a corresponding $Q_I$ as defined in the next result, and (roughly) we will take $Q=\bigcup_I Q_I$.

\begin{lemma} \label{claim}
	Suppose that Lemma \ref{mainlemma} has been proved for every $d'<d$. Let $\cB'\s \cB$ have $|\cB'|\geq \d|\cB|$ for some $\d>0$. Let $k\geq \tower_{8|\F|}((d+3)^{d+3},1/\d)$ arbitrary, let $I\s \lbrack d-1\rbrack, I\neq \emptyset$, and let $W_J\s \F^{J}$ be subspaces of dimension at most $k$ for every $J\s I,J\neq I,J\neq \emptyset$. Then there exist a multiset $Q'$, and a multiset $Q_s$ for each $s\in Q'$ with the following properties.
	
	\begin{enumerate}[label=(\arabic*)]
		\item The elements of $Q'$ are chosen from $\bigcap_{J\subset I,J\neq I,J\neq \emptyset}(W_J^{\perp}\otimes \F^{I \setminus J})$
		
		\item $Q'$ is $(f_1,1-f_2)$-forcing with $f_1=\tower_{8|\F|}((d+3)^{|I|+4}+2,k)$, $f_2=2^{-3^{d+2}}$
		
		\item For each $s\in Q'$, the elements of $Q_s$ are chosen from $\F^{I^c}$
		
		\item For each $s\in Q'$, $Q_s$ is $(f_3,1-f_4)$-forcing with $f_3=\tower_{8|\F|}(4,k)$, $f_4=2^{-3^{d+2}}$
		
		\item $\max_{s\in Q'}|Q_s|\leq 2\min_{s\in Q'}|Q_s|$
		
		\item The elements of the multiset $Q_I:=\{s\otimes t: s\in Q',t\in Q_s\}=\bigcup_{s\in Q'}(s\otimes Q_s)$ are chosen from $f_5\cB'-f_5\cB'$ with $f_5=2^{3^{d+3}}$.
	\end{enumerate}
\end{lemma}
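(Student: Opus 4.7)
Plan:

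The construction of $Q_I$ naturally combines three ingredients from the earlier lemmas: a dense system of rank-1 tensors sitting inside small $\cB'$-sums (Lemma \ref{findsystem}), restriction of that system to respect the orthogonality constraints imposed by the $W_J$'s (Lemma \ref{systemspace}), and the induction hypothesis Lemma \ref{mainlemma} applied at dimensions $|I|$ and $|I^c|$, both strictly less than $d$. The overall picture is to build a system inside $\cG$, split it along the $\F^I\otimes\F^{I^c}$ factorization, turn the $\F^{I^c}$-slice over each base $s$ into a forcing multiset $Q_s$ via induction, and then turn the collection of surviving $s$'s into a forcing multiset $Q'$ by another induction.

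First I would apply Lemma \ref{findsystem} to $\cB'$ to obtain an $l_0$-system $Q_0\subset 4^d\cB'-4^d\cB'$ with $l_0=16^d/\d^2$. Since $k$ dwarfs $l_0$, Lemma \ref{systemspace} can be invoked with the subspaces $W_J^\perp\otimes \F^{[d]\setminus J}$ for $J\subsetneq I$, $J\neq\emptyset$, to carve out a sub-system $\widetilde Q\subset Q_0$ of codimension parameter $l_1=O(k)$. Every $q\in\widetilde Q$ factors as $q=s\otimes t$ with $s\in\F^I$ automatically lying in $\bigcap_J W_J^\perp\otimes\F^{I\setminus J}$ (giving property (1)) and $t\in\F^{I^c}$ (giving property (3)).

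Next, for each base $(u_i)_{i\in I}$ in $\widetilde Q$ yielding a given $s=\bigotimes_{i\in I}u_i$, the corresponding slice $T(u_1,\dots,u_{|I|})$ is itself an $l_1$-system in $\F^{I^c}$, hence a dense multiset in $\cB_{I^c}$ of density at least $|\F|^{-O(k)}$. Applying the inductive hypothesis Lemma \ref{mainlemma} in dimension $|I^c|<d$ to this slice produces a candidate $Q_s$ contained in $O(1)$-many $\cB'$-sums and forcing on $\F^{I^c}$. A dyadic bucketing of the $s$'s according to $|Q_s|$ retains a positive fraction in a single size window, enforcing property (5). The surviving distinct $s$'s form a dense multiset in $\cB_I$ respecting the orthogonality constraints, so one more application of Lemma \ref{mainlemma} at dimension $|I|<d$ yields the desired forcing set $Q'$. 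Property (6) then falls out by composing the multiplicative constants $4^d\cdot 2^{3^{|I^c|+3}}\cdot 2^{3^{|I|+3}}$, all of which are absorbed by $f_5=2^{3^{d+3}}$.

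The main obstacle will be matching the quantitative tower bounds exactly, especially the surprisingly small $f_3=\tower_{8|\F|}(4,k)$ and the recursively defined $f_1=\tower_{8|\F|}((d+3)^{|I|+4}+2,k)$. A black-box application of Lemma \ref{mainlemma} in dimension $|I^c|$ to a density-$|\F|^{-O(k)}$ input naively yields a forcing parameter of the form $\tower_{8|\F|}\!\bigl((|I^c|+3)^{|I^c|+3}+O(1),\,k\bigr)$, which overshoots the claimed height-$4$ bound whenever $|I^c|\geq 2$. Reconciling the two appears to require a more careful accounting than a direct invocation of Lemma \ref{mainlemma} --- likely an internal induction on $|I|$ (so that the height $(d+3)^{|I|+4}+2$ in $f_1$ is built up one layer of the induction at a time, each costing only a constant number of tower levels) and a delicate use of the explicit system structure of the slice rather than just its density. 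Making this precise is what I expect to occupy the bulk of the actual proof.
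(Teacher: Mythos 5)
Your overall blueprint is close to the paper's: both use Lemma \ref{findsystem} to manufacture systems, Lemma \ref{systemspace} to impose the $W_J$ constraints, and the inductive hypothesis (Lemma \ref{mainlemma} at dimensions $|I|$ and $|I^c|$) to turn dense systems into forcing multisets. Your construction of $Q'$ — restrict a system to $\bigcap_J(W_J^\perp\otimes\F^{I\setminus J})$, observe density $|\F|^{-O(k)}$, apply Lemma \ref{mainlemma} at dimension $|I|$, absorb the $|\F|^{O(k)}$ input into two extra tower levels to land on $f_1=\tower_{8|\F|}((d+3)^{|I|+4}+2,k)$ — is essentially what the paper does.

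The gap is precisely where you flag it, namely the $Q_s$ construction, and your proposed fix (an internal induction on $|I|$ combined with exploiting the slice's system structure) is not the right one. The crucial observation you are missing is that the $W_J$ constraints live entirely in $\F^I$, so they need play no role at all in building the $\F^{I^c}$-side objects. The paper does \emph{not} slice the restricted system $\widetilde Q$: for each rank-one $s\in\cC'$ it constructs a system $R_s\subset\F^{I^c}$ directly from $\cD_s=\{t:s\otimes t\in\cB'\}$ via Lemma \ref{findsystem}, with codimension parameter $O(16^d/\d^2)$ depending only on $d,\d$ and not on $k$. Then for an arbitrary $s\in Q'$ (which need not be rank one — it is a signed sum of at most $2^{3^{d+3}}$ elements of $\cC'$, a point your sketch does not address), it writes $s=\sum_i\pm s_i$, sets $P_s=\bigcap_i R_{s_i}$, and only then applies Lemma \ref{mainlemma} at dimension $|I^c|$ to $P_s$. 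Because $|P_s|/|\cD|$ depends only on $d,\d,|\F|$, the resulting forcing parameter is $\tower_{8|\F|}((d-a+4)^{d-a+4},\,|\F|^{O_{d,\d}(1)})\le\tower_{8|\F|}((d+3)^{d+3}+4,1/\d)\le\tower_{8|\F|}(4,k)$, where the last inequality uses the standing assumption $k\ge\tower_{8|\F|}((d+3)^{d+3},1/\d)$. This is how the surprisingly small height-$4$ bound $f_3$ arises; nothing resembling an internal induction on $|I|$ is needed. Finally, your "dyadic bucketing" for property (5) would discard a fraction of $Q'$, which could destroy the forcing property; the paper instead simply repeats each $Q_s$ an appropriate number of times, which preserves forcing and costs nothing.
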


\begin{proof}
	By symmetry, we may assume that $I=\lbrack a\rbrack$ for some $1\leq a \leq d-1$. Let $\cC$ be the multiset $\{u_1\otimes \dots \otimes u_a: u_i\in \F^{n_i}\}$ and let $\cD$ be the multiset $\{u_{a+1}\otimes \dots \otimes u_d: u_i\in \F^{n_i}\}$. For each $s\in \cC$, let $\cD_s=\{t\in \cD: s\otimes t\in \cB'\}$. Also, let $\cC'=\{s\in \cC: |\cD_s|\geq \frac{\d}{2}|\cD|\}$. Clearly, $|\cC'|\geq \frac{\d}{2}|\cC|$. By Lemma \ref{findsystem}, there exists a $g_1$-system $R$ (with respect to $\F^{I}$) with elements chosen from $g_2\cC'-g_2\cC'$ with $g_1=\frac{4\cdot 16^d}{\d^2}$ and $g_2=4^d$. By Lemma \ref{systemspace}, $R\cap \bigcap_{J\subset I,J\neq I,J\neq \emptyset} (W_J^{\perp}\otimes \F^{I\setminus J})$ contains a $g_3$-system $T'$ for $g_3=\frac{4\cdot 16^d}{\d^2}+2^dk$. Now $|T'|\geq |\F|^{-dg_3}|\cC|$. By Lemma \ref{mainlemma} (applied to $a$ in place of $d$), it follows that there exists a multiset $Q'$ whose elements are chosen from $g_4T'-g_4T'$ and which is $(g_5,1-g_6)$-forcing for $g_4=2^{3^{a+3}}\leq 2^{3^{d+2}}$, $g_5=\tower_{8|\F|}((a+4)^{a+4},|\F|^{dg_3})\leq \tower_{8|\F|}((d+3)^{a+4},|\F|^{dg_3})$ and $g_6=2^{-3^{a+3}}\geq 2^{-3^{d+2}}$. But $g_3\leq 2\cdot 2^dk$, so $|\F|^{dg_3}\leq \tower_{8|\F|}(2,k)$, therefore $Q'$ satisfies (1) and (2) in the statement of this lemma.
	
	By Lemma \ref{findsystem}, for each $s\in \cC'$ there exists a $g_7$-system $R_s$ (with respect to $\F^{I^c}$) contained in $g_8\cD_s-g_8\cD_s$, where $g_7=\frac{4\cdot 16^d}{\d^2}$ and $g_8=4^d$. For every $s\in Q'$, choose $s_1,\dots,s_{l+l'}\in \cC'$ with $l,l'\leq 2^{3^{d+3}}$ such that $s=s_1+\dots+s_l-s_{l+1}-\dots-s_{l+l'}$ (this is possible, since the elements of $Q'$ are chosen from $2g_2g_4\cC'-2g_2g_4\cC'$ and $2g_2g_4\leq 2^{3^{d+3}}$), and let $P_s=\bigcap_{i\leq l+l'}R_s$. By Lemma \ref{intersect}, $P_s$ contains a $g_9$-system with $g_9=2\cdot 2^{3^{d+3}}\cdot \frac{4\cdot 16^d}{\d^2}$, therefore $|P_s|\geq g_{10}|\cD|$ for $g_{10}=|\F|^{-dg_9}$. By Lemma \ref{mainlemma} (applied to $d-a$ in place of $d$), for every $s\in Q'$ there exists a multiset $Q_s$ with elements chosen from $g_{11}P_s-g_{11}P_s$ which is $(g_{12},1-g_{13})$-forcing for $g_{11}=2^{3^{d-a+3}}\leq 2^{3^{d+2}}$, $g_{12}=\tower_{8|\F|}((d-a+4)^{d-a+4},|\F|^{dg_9})\leq \tower_{8|\F|}((d+3)^{d+3},|\F|^{dg_9})\leq \tower_{8|\F|}((d+3)^{d+3}+4,1/\d)\leq \tower_{8|\F|}(4,k)$ and $g_{13}=2^{-3^{d-a+3}}\geq 2^{-3^{d+2}}$. Notice that if we repeat every element of $Q_s$ the same number of times, then the multiset obtained is still $(g_{12},1-g_{13})$-forcing, so we may assume that $\max_{s\in Q'}|Q_s|\leq 2\min_{s\in Q'}|Q_s|$. Define $Q_I=\{s\otimes t:s\in Q',t\in Q_s\}=\bigcup_{s\in Q'}(s\otimes Q_s)$. Note that as $R_s\subset g_8\cD_s-g_8\cD_s$ for all $s\in \cC'$, we have $s\otimes R_s\subset g_8\cB'-g_8\cB'$ for all $s\in \cC'$. But the elements of $Q'$ are chosen from $2g_2g_4\cC'-2g_2g_4\cC'$, so $s\otimes P_s\s 4g_2g_4g_8\cB'-4g_2g_4g_8\cB'$ for all $s\in Q'$. Finally, the elements of $Q_s$ are chosen from $g_{11}P_s-g_{11}P_s$, so the elements of $s\otimes Q_s$ are chosen from $8g_2g_4g_8g_{11}\cB'-8g_2g_4g_8g_{11}\cB'$ for every $s\in Q'$. Since $8g_2g_4g_8g_{11}\leq 8\cdot (4^d)^2\cdot (2^{3^{d+2}})^2=2^{3+4d+2\cdot 3^{d+2}}\leq 2^{3^{d+3}}$, property (6) is satisfied.
\end{proof}

\begin{remark} \label{expensive}
	Forcing condition (1) in Lemma \ref{claim} is the most expensive step in the proof of Lemma \ref{mainlemma}. That condition is the reason why $f_1$ is so large in (2).
\end{remark}

\smallskip

We have already seen in Lemma \ref{focusondeg} that (for suitably chosen $Q$) the set of $r\in \cG$ that satisfy $r.q=0$ for most $q\in Q$ are of the form $r_1+r_2$ where $r_1$ lives in a fixed small subspace of $\cG$ and $r_2$ is $k$-degenerate for some small $k$. The next lemma allows us to turn the subspaces witnessing the $k$-degeneracy of $r_2$ into slightly larger subspaces which however do not depend on $r$. This is done one by one, in an order determined by which $\F^{I^c}$ the subspace lives in. The order in which these index sets are considered is not arbitrary: we define $\prec$ to be any total order on the set of non-empty subsets of $\lbrack d-1 \rbrack$ such that if $J\subsetneq I$ then $J\prec I$.

\begin{lemma} \label{keylemma}
	Let $k\geq \tower_{8|\F|}((d+3)^{d+3},1/\d)$ arbitrary. Let $I\s \lbrack d-1\rbrack, I\neq \emptyset$ and let $W_J\s \F^{J},W_{J^c}\s \F^{J^c}$ be subspaces of dimension at most $k$ for every $J\prec I$. Moreover, let $W_{\lbrack d\rbrack}\s \F^{\lbrack d\rbrack}$ have dimension at most $k$. Suppose that $Q',Q_s$ (and $Q_I$) have the six properties described in Lemma \ref{claim}. Then any array $r\in W_{\lbrack d\rbrack}+ \sum_{J\prec I} (W_J\otimes \F^{J^c}+\F^{J}\otimes W_{J^c})+\sum_{J\succeq I} \F^{J}\otimes H_{J^c}(r)$ with $\dim(H_{J^c}(r))\leq k$ and the property that $r.q=0$ for at least $(1-\frac{1}{4}(2^{-3^{d+2}})^2)|Q_I|$ choices $q\in Q_I$ is contained in $W_{\lbrack d \rbrack}+\sum_{J\preceq I} (U_J\otimes \F^{J^c}+\F^{J}\otimes U_{J^c})+\sum_{J\succ I} \F^{J}\otimes K_{J^c}(r)$ for some $U_J\s \F^{J},U_{J^c}\s \F^{J^c}$ not depending on $r$ and some $K_{J^c}(r)\s \F^{J^c}$ possibly depending on $r$, all of dimension at most $\tower_{8|\F|}((d+3)^{|I|+4}+3,k)$.
\end{lemma}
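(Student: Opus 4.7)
\emph{Phase 1 (Averaging and $Q_s$-forcing).} Using property (5) (near-balance of the $|Q_s|$) and Markov's inequality applied to the hypothesis that $r \cdot q = 0$ for at least $(1 - \tfrac14 f_4^2)|Q_I|$ of $q = s \otimes t \in Q_I$, I will extract a subset $G \subseteq Q'$ with $|G| \ge (1-f_2)|Q'|$ such that for every $s \in G$, $(r \cdot s) \cdot t = 0$ for at least $(1-f_4)|Q_s|$ choices $t \in Q_s$. The $(f_3,1-f_4)$-forcing of $Q_s$ (property (4)) then gives
\[ r \cdot s \in M^{(s)} := \sum_{\emptyset \neq J' \subseteq I^c} V^{(s)}_{J'} \otimes \F^{I^c \setminus J'}, \qquad s \in G, \]
where $V^{(s)}_{J'} \subseteq \F^{J'}$ has dimension at most $f_3$ and depends only on $Q_s$.

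\emph{Phase 2 (Universal $U_{I^c}$ via greedy maximal independence).} The main step is to construct a universal subspace $U_{I^c} \subseteq \F^{I^c}$ of dimension at most $(8|\F|)^{f_1} = \tower_{8|\F|}((d+3)^{|I|+4}+3, k)$ such that the ``generic part'' of $r \cdot s$---after stripping off the absorbable contributions (from $W_{[d]}$, from the $J \prec I$ summands $W_J \otimes \F^{J^c} + \F^J \otimes W_{J^c}$, and from the $J \succ I$ summands $\F^J \otimes H_{J^c}(r)$, which will respectively become $W_{[d]}$, the $J \prec I$ terms in the conclusion, and $\F^J \otimes K_{J^c}(r)$)---lies in $U_{I^c}$ for at least $(1-f_2)|Q'|$ of $s \in Q'$. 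A direct computation using property (1) shows that the components $W_{J'} \otimes \F^{\{J'\}^c}$ of $r$ with $\emptyset \neq J' \subsetneq I$ contribute nothing to $r \cdot s$ for any $s \in Q'$, since $s \in W_{J'}^\perp \otimes \F^{I\setminus J'}$. I will construct $U_{I^c}$ greedily: pick $s_1, s_2, \ldots \in G$ iteratively so that each new vector $r \cdot s_{j+1}$ introduces a genuinely new direction modulo the current universal $U^{(j)}_{I^c}$ (built cumulatively from the images $r \cdot s_i$ together with the $V^{(s_i)}_{J'}$'s). The length of this procedure is to be bounded by $(8|\F|)^{f_1}$ via the $(f_1,1-f_2)$-forcing property (2) of $Q'$ in the dual direction---too many independent directions would yield too many linearly independent functionals on $\F^I$ that vanish on nearly all of $Q'$, violating the forcing bound $f_1$---which gives the one-level-of-tower jump in the final bound.

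\emph{Phase 3 (Applying $Q'$-forcing via quotient).} With $U_{I^c}$ in hand, I will pass to the quotient $\overline{\F^{I^c}} := \F^{I^c}/U_{I^c}$ and consider the image $\overline{r} \in \F^I \otimes \overline{\F^{I^c}}$ of $r$ modulo $U_{I^c}$ and the absorbed components. By construction, $\overline{r} \cdot s = 0$ for at least $(1-f_2)|Q'|$ of $s \in Q'$, so for every $\xi \in (\overline{\F^{I^c}})^*$ the slice $\overline{r}_\xi := (\mathrm{id} \otimes \xi)(\overline{r}) \in \F^I$ satisfies the same vanishing. By property (2), $\overline{r}_\xi \in \sum_{\emptyset \neq J' \subseteq I} V_{J'} \otimes \F^{I \setminus J'}$ with universal $V_{J'}$ of dimension $\le f_1$. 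Ranging over all $\xi$ yields $\overline{r} \in \sum_{J' \subseteq I}(V_{J'} \otimes \F^{I \setminus J'}) \otimes \overline{\F^{I^c}}$; lifting back and using $\F^{I \setminus J'} \otimes \F^{I^c} = \F^{\{J'\}^c}$ places each summand into $V_{J'} \otimes \F^{\{J'\}^c} = U_{J'} \otimes \F^{\{J'\}^c}$ with $U_{J'} := V_{J'}$, so that $r$ is in the desired form $W_{[d]} + \sum_{J \preceq I}(U_J \otimes \F^{J^c} + \F^J \otimes U_{J^c}) + \sum_{J \succ I}\F^J \otimes K_{J^c}(r)$.

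\emph{Main obstacle.} The most delicate step is Phase 2: proving that the greedy procedure terminates after at most $(8|\F|)^{f_1}$ iterations, so that $\dim U_{I^c} \le (8|\F|)^{f_1}$. This rests on a careful use of the $(f_1,1-f_2)$-forcing of $Q'$ in the dual direction inside the greedy argument, together with the property-(1) vanishing, which keeps the contributions of the $W_{J'} \otimes \F^{\{J'\}^c}$ pieces from inflating the dimension.
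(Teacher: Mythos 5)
Phase 1 of your proposal matches the paper exactly, but Phase 2 contains a genuine gap, and Phase 3 inherits it.

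The core difficulty that the lemma has to address is that $r_4 := r - (\text{absorbed part}) \in \F^I \otimes H_{I^c}(r)$ with $\dim H_{I^c}(r)$ as large as $k$, so the $s$-slices $r_4 \cdot s$ can take on up to $|\F|^k$ distinct values — there is no hope that a \emph{universal} subspace $U_{I^c}$ of dimension $\tower_{8|\F|}((d+3)^{|I|+4}+3,k)$ can contain $r_4 \cdot s$ (even modulo absorbed contributions) for most $s$. Yet that is exactly what your Phase~2 asserts, and Phase~3 needs it ($\overline{r}\cdot s = 0$ for most $s$) in order to apply the $Q'$-forcing by slicing. Two specific things go wrong. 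First, the greedy construction you describe builds $U_{I^c}$ from the images $r\cdot s_i$ and hence is manifestly $r$-\emph{dependent}, not universal; you cannot then report $U_{I^c}$ as one of the fixed subspaces $U_J, U_{J^c}$ in the conclusion. Second, the claimed termination bound ``$(8|\F|)^{f_1}$ via the $(f_1,1-f_2)$-forcing of $Q'$ in the dual direction'' is unjustified: forcing constrains an almost-annihilated array $r' \in \F^I$ to lie in a \emph{structured} set $\sum V_{J'}\otimes\F^{I\setminus J'}$, which is far from low-dimensional, so ``too many independent directions $\Rightarrow$ too many functionals annihilating $Q'$'' gives no contradiction with $f_1$. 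Finally, you never allocate a slot for the $(g_1+2^dk)$-degenerate residuals of $r_4\cdot s$ (coming from the $V^{(s)}_{J'}$ with $J'\subsetneq I^c$ and from the $r_2, r_3$ contributions): these are $s$-dependent and do not land in any $U_{I^c}$; they must be pushed into $r$-dependent terms $K_{J^c}(r)$ with $J \supsetneq I$.

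The paper resolves all three issues with a different mechanism. It defines an $r$-\emph{independent} auxiliary set $X\subseteq \F^{I^c}$ (consisting of those $x$ such that, for a positive proportion of $s\in Q'$, $x$ is within a degenerate error of $V_{I^c}(s)+\sum_{J\subset I,J\neq I}(\F^J\otimes W_{J^c})s$), covers $X$ up to $2g_4$-degenerate error by a universal subspace $Z$ of dimension at most $|\F|^{2g_1} = |\F|^{2\tower_{8|\F|}(4,k)}$ via a \emph{pigeonhole} argument (not property~(2)), and then exploits $r_4\cdot s\in H_{I^c}(r)$ by choosing a basis of $H_{I^c}(r)$ adapted to $X(r)=X\cap H_{I^c}(r)$. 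The coordinates in the complement of $X(r)$ have $\F^I$-coefficients $s'_j$ that vanish against almost all of $Q'$; \emph{that} is where property (2) enters, producing the universal $L_J\subseteq\F^J$ of dimension $f_1 = \tower_{8|\F|}((d+3)^{|I|+4}+2,k)$ which dominates the final bound. The $X(r)$-coordinates are handled by $Z$ plus $r$-dependent degenerate pieces $K'_{J^c}(r)$ for $J\supsetneq I$. To salvage your approach you would need (i) a genuinely $r$-independent replacement for the greedy $U_{I^c}$, (ii) a correct source for its dimension bound, and (iii) an explicit mechanism for where the degenerate residuals land.
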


\begin{proof}
	By (4) in Lemma \ref{claim}, for every $s\in Q'$ there exist subspaces $V_J(s)\s \F^{J}$ for every $J\s I^c,J\neq \emptyset$, with dimension at most $g_1=\tower_{8|\F|}(4,k)$ such that the set of arrays $t\in \F^{I^c}$ with $t.q=0$ for at least $(1-g_2)|Q_s|$ choices $q\in Q_s$ is contained in $\sum_{J\s I^c,J\neq \emptyset} V_J(s)\otimes \F^{I^c\setminus J}$, where $g_2=2^{-3^{d+2}}$. If $r\in \cG$ has $r.q=0$ for at least $(1-\frac{1}{4}(2^{-3^{d+2}})^2)|Q_I|$ choices $q\in Q_I$, then by averaging and using (5) from Lemma \ref{claim}, for at least $(1-g_3)|Q'|$ choices $s\in Q'$ we have $r.(s\otimes t)=0$ for at least $(1-g_2)|Q_s|$ choices $t\in Q_s$, where $g_3=\frac{1}{2}2^{-3^{d+2}}$. Thus, (noting that $r.(s\otimes t)=(rs).t$), $rs\in \sum_{J\s I^c,J\neq \emptyset} V_J(s)\otimes \F^{I^c\setminus J}$ holds for at least $(1-g_3)|Q'|$ choices $s\in Q'$. Let $Q'(r)$ be the submultiset of $Q'$ consisting of those $s\in Q'$ for which $rs\in \sum_{J\s I^c,J\neq \emptyset} V_J(s)\otimes \F_2^{I^c\setminus J}$. Then we have $|Q'(r)|\geq (1-g_3)|Q'|$.
	
	\smallskip
	
	Note that we can write $r=r_1+r_2+r_3+r_4$ where $r_1\in \sum_{J\subset I,J\neq I,J\neq \emptyset} W_J\otimes \F^{J^c}$, $r_2\in \sum_{J\prec I, J\not \subset I} (W_J\otimes \F^{J^c}+\F^{J}\otimes W_{J^c})+\sum_{J\succ I} \F^{J}\otimes H_{J^c}(r)$, $r_3\in W_{\lbrack d\rbrack}+\sum_{J\subset I,J\neq I,J\neq \emptyset} \F^{J}\otimes W_{J^c}$ and $r_4\in \F^{I}\otimes H_{I^c}(r)$. By (1) in Lemma \ref{claim}, the elements of $Q'$ belong to $\bigcap_{J\subset I,J\neq I,J\neq \emptyset}(W_J^{\perp}\otimes \F^{I \setminus J})$, so we have $r_1s=0$ for every $s\in Q'$. Since $\dim(W_J),\dim(W_{J^c}),\dim(H_{J^c}(r))\leq k$, $r_2s$ is $2^{d}k$-degenerate. Also, $r_3s\in \sum_{J\subset I,J\neq I} ((\F^{J}\otimes W_{J^c})s)$. (Here and below, for a subspace $L\s \cG$ and an array $s\in \F^{I}$, we write $Ls$ for the subspace $\{rs: r\in L\}\s \F^{I^c}$.) It follows that for every $s\in Q'(r)$, there exists some $t(s)\in V_{I^c}(s)+\sum_{J\subset I,J\neq I} ((\F^{J}\otimes W_{J^c})s)$ such that $r_4s-t(s)$ is $g_4$-degenerate for $g_4=g_1+2^dk$ (we have used that $\dim (V_{J}(s))\leq g_1$).
	
	Now let $g_5=\frac{g_3}{|\F|^{k}}$. Let $X$ be the subset of $\F^{I^c}$ consisting of those arrays $x$ for which for at least $g_5|Q'|$ choices $s\in Q'$, there exists some $t(s)\in V_{I^c}(s)+\sum_{J\subset I,J\neq I}((\F^{J}\otimes W_{J^c})s)$ such that $x-t(s)$ is $g_4$-degenerate. Choose a maximal subset $\{x_1,\dots,x_m\}\s X$ such that no $x_i-x_j$ with $i\neq j$ is $2g_4$-degenerate. Then there do not exist $i\neq j$, $s\in Q'$ and $t\in V_{I^c}(s)+\sum_{J\subset I,J\neq I} ((\F^{J}\otimes W_{J^c})s)$ with $x_i-t$ and $x_j-t$ both $g_4$-degenerate. It follows, by the definition of $X$, and using that the dimension of $V_{I^c}(s)+\sum_{J\subset I,J\neq I} ((\F^{J}\otimes W_{J^c})s)$ is at most $g_1+2^dk$ that $mg_5|Q'|\leq |Q'|\cdot |\F|^{g_1+2^dk}$, therefore $m\leq g_6=\frac{|\F|^{g_1+2^dk}}{g_5}$. Let $Z=\text{span}(x_1,\dots,x_m)$. Then $\dim(Z)\leq g_6$ and for every $x\in X$, there is some $z\in Z$ such that $x-z$ is $2g_4$-degenerate.
	
	By the definition of $X$, if $t\not \in X$, then the number of choices $s\in Q'(r)$ for which $r_4s=t$ is at most $g_5|Q'|$. On the other hand, notice that $r_4s\in H_{I^c}(r)$ for every $s\in Q'$. Since $|H_{I^c}(r)|\leq |\F|^{k}$, it follows that $r_4s\in X$ for at least $|Q'(r)|-|\F|^kg_5|Q'|=|Q'(r)|-g_3|Q'|\geq (1-2g_3)|Q'|=(1-2^{-3^{d+2}})|Q'|$ choices $s\in Q'$.
	
	Let $X(r)=X\cap H_{I^c}(r)$. Let $t_1,\dots,t_{\a}$ be a maximal linearly independent subset of $X(r)$ and extend it to a basis $t_1,\dots,t_{\a},t'_1,\dots,t'_{\beta}$ for $H_{I^c}(r)$. Now if a linear combination of $t_1,\dots,t_{\a},t'_1,\dots,t'_{\beta}$ is in $X$, then the coefficients of $t'_1,\dots,t'_{\beta}$ are all zero. Write $r_4=\sum_{i\leq \a}s_i\otimes t_i+\sum_{j\leq \beta}s'_j\otimes t'_j$ for some $s_i,s'_j\in \F^{I}$. Since $r_4q\in X$ for at least $(1-2^{-3^{d+2}})|Q'|$ choices $q\in Q'$, we have, for all $j$, that $s'_j.q=0$ for at least $(1-2^{-3^{d+2}})|Q'|$ choices $q\in Q'$. Thus, by (2) in Lemma \ref{claim} there exist subspaces $L_J\s \F^{J}$ ($J\s I,J\neq \emptyset$) not depending on $r$, and of dimension at most $\tower_{8|\F|}((d+3)^{|I|+4}+2,k)$ such that $s'_j\in \sum_{J\s I,J\neq \emptyset} L_J\otimes \F^{I\setminus J}$ for all $j$. Thus, $r_4\in \sum_{i\leq \a}s_i\otimes t_i+\sum_{J\s I,J\neq \emptyset} L_J\otimes \F^{J^c}$. Moreover, for every $i\leq \a$, we have $t_i\in X$, so there exist $z_i\in Z$ such that $t_i-z_i$ is $2g_4$-degenerate. It follows that $r_4\in \F^{I}\otimes Z+\sum_{J\supset I,J\neq I,J\subset \lbrack d-1\rbrack} \F^{J}\otimes K'_{J^c}(r)+\sum_{J\s I,J\neq \emptyset} L_J\otimes \F^{J^c}$ for some $K'_{J^c}(r)\s \F^{J^c}$ of dimension at most $\a\cdot 2g_4\leq k\cdot 2g_4$.
	
	We claim that $\dim(Z),\dim(K'_{J^c})$ and $\dim(L_J)$ are all bounded by $\tower_{8|\F|}((d+3)^{|I|+4}+2,k)$. If this holds, then the proof of this lemma is complete, since $k+\tower_{8|\F|}((d+3)^{|I|+4}+2,k)\leq \tower_{8|\F|}((d+3)^{|I|+4}+3,k)$.
	
	Now $\dim(K'_{J^c})\leq 2kg_4=2k(g_1+2^dk)\leq 2k(\tower_{8|\F|}(4,k)+2^dk)\leq \tower_{8|\F|}((d+3)^{|I|+4}+2,k)$.
	
	Also, $\dim(Z)\leq g_6=\frac{|\F|^{g_1+2^dk}}{g_5}=2^{3^{d+2}+1}|\F|^{g_1+2^dk+k}\leq |\F|^{2g_1}\leq \tower_{8|\F|}((d+3)^{|I|+4}+2,k)$.
	
	Finally, as we have already noted, $\dim(L_J)\leq \tower_{8|\F|}((d+3)^{|I|+4}+2,k)$. This completes the proof of the claim and the lemma.
\end{proof}

\begin{proof}[Proof of Lemma \ref{mainlemma}]
	As stated earlier, the proof goes by induction on $d$. For $d=1$, by Bogolyubov's lemma there is a subspace $U\s \F^{n_1}$ of codimension at most $1/\d^2$ contained in $2\cB'-2\cB'$. Choose $Q=U$. Now if $r.q=0$ for at least $(1-2^{-3^4})|Q|$ choices $q\in Q$ then the same holds for all $q\in Q$, therefore $r\in U^{\perp}$, but $\dim(U^{\perp})\leq 1/\d^2\leq \tower_{8|\F|}(5^5,1/\d)$, so the case $d=1$ is proved.
	
	\smallskip  
	
	Now let us assume that $d\geq 2$. Extend the total order $\prec$ defined above such that it now contains $\emptyset$ which has $\emptyset \prec I$ for every non-empty $I\s \lbrack d-1\rbrack$. Say $\emptyset=I_0\prec I_1\prec I_2\prec \dots \prec I_{2^{d-1}-1}$ where $\{I_0,\dots,I_{2^{d-1}-1}\}=P(\lbrack d-1\rbrack)$.
	
	\smallskip
	
	\noindent \emph{Claim.} For every $0\leq i\leq 2^{d-1}-1$ there exists a multiset $Q_{I_i}$ with elements chosen from $2^{3^{d+3}}\cB'-2^{3^{d+3}}\cB'$, and subspaces $W_{I_j}(i)\s \F^{I_j}$, $W_{(I_j)^c}(i)\s \F^{(I_j)^c}$ for every $j\leq i$ (for $j=0$, we only require $W_{\lbrack d\rbrack}(i)$ and not $W_{\emptyset}(i)$) of dimension at most $$g_1(i)=\tower_{8|\F|}\Big((d+3)^{d+3}+3+\sum_{1\leq j\leq i} ((d+3)^{|I_j|+4}+3),1/\d \Big),$$ with the following property. If $r\in \cG$ has $r.q=0$ for at least $(1-\frac{1}{4}(2^{-3^{d+2}})^2)|Q_{I_j}|$ choices $q\in Q_{I_j}$ for all $j\leq i$, then $r\in W_{\lbrack d\rbrack}(i)+ \sum_{1\leq j\leq i} (W_{I_j}(i)\otimes \F^{(I_j)^c}+\F^{I_j}\otimes W_{(I_j)^c}(i))+\sum_{j>i}\F^{I_j}\otimes H_{(I_j)^c}(i,r)$ holds for some $H_{(I_j)^c}(i,r)$ possibly depending on $r$ and of dimension at most $g_1(i)$.
	
	\smallskip
	
	\noindent \emph{Proof of Claim.} This is proved by induction on $i$. For $i=0$, by Lemma \ref{focusondeg}, there exist $Q_{\emptyset}\s 2\cB'-2\cB'$ and $V_{\lbrack d\rbrack}\s \F^{\lbrack d\rbrack}$ of dimension at most $5|\F|^{4/\d^2}$ such that if $r.q=0$ for at least $\frac{3}{4}|Q_{\emptyset}|$ choices $q\in Q_{\emptyset}$, then $r$ can be written as $r=x+y$ where $x\in V_{\lbrack d\rbrack}$ and $y$ is $g_2$-degenerate for $g_2=\tower_{8|\F|}((d+3)^{d+3}+3,1/\d)$. Hence we can take $W_{\lbrack d\rbrack}(0)=V_{\lbrack d\rbrack}$.
	
	Once we have found suitable sets $W_{I_j}(i-1)$ and $W_{(I_j)^c}(i-1)$ for all $j\leq i-1$, we can apply Lemmas \ref{claim} and \ref{keylemma} with $I=I_i$ and $k=g_1(i-1)$ to find a suitable $Q_{I_i}$, $W_{I_j}(i)$ and $W_{(I_j)^c}(i)$ for all $j\leq i$, and the claim is proved, since $g_1(i)=\tower_{8|\F|}((d+3)^{|I_i|+4}+3,g_1(i-1))$.
	
	\smallskip
	
	Now, after taking several copies of each $Q_{I}$, we may assume that additionally $\max_I |Q_I|\leq 2\min_{I} |Q_I|$. Let $Q=\bigcup_{I\s \lbrack d-1\rbrack}Q_I$ and suppose that $r.q=0$ for at least $(1-2^{-3^{d+3}})|Q|$ choices $q\in Q$. Since $2^{-3^{d+3}}\leq \frac{1}{2\cdot 2^{d-1}}\cdot \frac{1}{4}(2^{-3^{d+2}})^2$, it follows that for every $I\s \lbrack d-1\rbrack$ we have $r.q=0$ for at least $(1-\frac{1}{4}(2^{-3^{d+2}})^2)|Q_I|$ choices $q\in Q_I$. By the Claim with $i=2^{d-1}-1$, we get that $r\in \sum_{I\s \lbrack d\rbrack} V_I\otimes \F^{I^c}$ for some $V_I\s \F^{I}$ not depending on $r$, and of dimension at most $$g_1(2^{d-1}-1)=\tower_{8|\F|}\Big((d+3)^{d+3}+3+\sum_{\emptyset \neq I\s \lbrack d-1\rbrack} ((d+3)^{|I|+4}+3),1/\d \Big).$$ But
	\begin{align*}
		 (d+3)^{d+3}+3+\sum_{\emptyset \neq I\s \lbrack d-1\rbrack} ((d+3)^{|I|+4}+3) & \leq (d+3)^{d+3}+3\cdot2^{d-1}+(d+3)^4\sum_{1\leq k\leq d-1} {d-1 \choose k} (d+3)^k \\
		 & \leq (d+3)^{d+3}+3\cdot 2^{d-1}+(d+3)^4(d+4)^{d-1} \\
		 & \leq (d+3)^{d+3}+3\cdot 2^{d-1}+(d+4)^{d+3} \\
		 & \leq (d+4)^{d+4}
	\end{align*}
	This completes the proof of the lemma.
\end{proof}

\section*{Acknowledgment} I would like to thank Timothy Gowers for helpful discussions. I am also grateful for his valuable comments on the paper.

\thebibliography{99}

\bibitem{bhowmicklovett} A. Bhowmick and S. Lovett, \emph{Bias vs structure of polynomials in
large fields, and applications in effective algebraic geometry and coding theory},
arXiv preprint, arXiv:1506.02047, 2015.

\bibitem{gowersjanzer} W. T. Gowers and O. Janzer, \emph{Subsets of Cayley graphs that induce many edges}, arXiv preprint

\bibitem{gowerswolf} W. T. Gowers and J. Wolf, \emph{Linear forms and higher-degree uniformity for
functions on $\F_p^n$}, Geometric and Functional Analysis, 21(1):36-69, 2011.

\bibitem{greentao} B. Green and T. Tao, \emph{The distribution of polynomials over finite fields,
with applications to the Gowers norms}, Contributions to Discrete Mathematics,
4(2), 2009. 

\bibitem{fouriersurvey} H. Hatami, P. Hatami and S. Lovett, \emph{Higher-order Fourier Analysis and Applications}

\bibitem{kaufmanlovett} T. Kaufman and S. Lovett, \emph{Worst case to average case reductions for
polynomials}, In Foundations of Computer Science, 2008. FOCS'08. IEEE 49th
Annual IEEE Symposium on, pages 166–175. IEEE, 2008

\bibitem{lovett} S. Lovett, \emph{The analytic rank of tensors and its applications}, arXiv preprint, arXiv:1806.09179, 2018

\bibitem{naslund} E. Naslund, \emph{The partition rank of a tensor and k-right corners in $F_q^n$}, arXiv
preprint, arXiv:1701.04475, 2017.

\bibitem{taovu} T. C. Tao and V. Vu, \emph{Additive combinatorics}, Cambridge studies in advanced
mathematics \textbf{105}, CUP 2006.

\end{document}